\numberwithin{equation}{section}
\def\Z{\mathbb{Z}{\ssc\,}}
\def\C{\mathbb{C}{\ssc\,}}
\def\ssc{\scriptscriptstyle}
\def\d{\delta}
\def\D{\Delta}
\def \End{{\rm End}}
\def \<{\langle}
\def \>{\rangle}
\def\gl{\mathfrak {gl}}
\def\sl{\mathfrak {sl}}
\def\vs{\vspace*}
\def \be{\begin{equation}\label}
\def \ee{\end{equation}}
\def \bex{\begin{example}\label}
\def \eex{\end{example}}
\def \bl{\begin{lem}\label}
\def \el{\end{lem}}
\def \bt{\begin{thm}\label}
\def \et{\end{thm}}
\def \bp{\begin{prop}\label}
\def \ep{\end{prop}}
\def \br{\begin{rem}\label}
\def \er{\end{rem}}
\def \bc{\begin{coro}\label}
\def \ec{\end{coro}}
\def \bd{\begin{de}\label}
\def \ed{\end{de}}
\newtheorem{thm}{Theorem}[section]
\newtheorem{prop}[thm]{Proposition}
\newtheorem{coro}[thm]{Corollary}
\newtheorem{lem}[thm]{Lemma}
\newtheorem{de}[thm]{Definition}
\theoremstyle{definition}
\newtheorem{example}[thm]{Example}
\newtheorem{rem}[thm]{Remark}
\makeatletter \@addtoreset{equation}{section}
\newcommand{\thmref}[1]{Theorem~\ref{#1}}
\newcommand{\secref}[1]{Section~\ref{#1}}
\newcommand{\lemref}[1]{Lemma~\ref{#1}}
\newcommand{\propref}[1]{Proposition~\ref{#1}}
\newcommand{\eqnref}[1]{(\ref{#1})}
\begin{document}

\title{Central elements  of the degenerate quantum \\ general linear group}
\author{Hengyun Yang}
\address{\it Hengyun Yang: Department of Mathematics, Shanghai Maritime University, Shanghai 201306, China}
\email{hyyang@shmtu.edu.cn}

\author{Yang Zhang}
\address{\it Yang Zhang:  School of Mathematics and Physics, The University of Queensland, St Lucia, QLD 4072, Australia}
\email{yang.zhang@uq.edu.au}

\begin{abstract}
We construct central elements in the degenerate quantum general linear group in the sense of  Cheng, Wang,  and R. B. Zhang \cite{CWZ}, and in particular,  give an explicit formula for the corresponding quantum Casimir element.  Our approach is based on the explicit $L$‑operators, and we further construct a universal $L$-operator, which is a spectral parameter-dependent solution of the quantum Yang-Baxter equation.  This in turn yields an RLL realisation of the degenerate quantum general linear group. Our main results indicate deep links with the quantum general linear supergroup, thereby providing new directions for studying its structure and representations.  
\end{abstract}

\keywords{degenerate quantum group, universal $L$-operator,
Yang-Baxter equation, RLL realisation}

\subjclass[2010]{17B37, 17B38, 17B35, 20G42}

\maketitle

\section{Introduction}
Recently, Cheng, Wang and R. B. Zhang \cite{CWZ} introduced a class of new Hopf algebras called
degenerate quantum groups.  This  may be thought of as a degenerate version of the usual Drinfeld-Jimbo quantum groups \cite{D1,J1}. The origin of this idea traces back to the work of Zachos \cite{Za}, where he studied symmetry properties of wave functions of quantum mechanical systems under the action of quantum $\sl_2$ at $\sqrt{-1}$, resulting in a Hopf algebra structure denoted by  ${\rm U}_{q}(\mathfrak{sl}_{1,1})$ (see \eqnref{eq: Zac}). In type $A$,  the degenerate quantum group ${\rm U}_{q}(\gl_{m,n})$ is obtained from the Drinfeld-Jimbo quantum group ${\rm U}_q(\mathfrak{gl}_{m+n})$ by replacing the  subalgebra ${\rm U}_q(\mathfrak{sl}_2)$ associated to the $(m+1)$-th node of the Dynkin diagram of $\mathfrak{gl}_{m+n}$ with Zachos' algebra ${\rm U}_{q}(\mathfrak{sl}_{1,1})$. Additionally, the associated Serre relations are appropriately modified during this process. This construction can be generalised to degenerate  quantum groups of types  $B,C$ and $D$ \cite{CWZ}.

Intriguingly, the classification of  finite dimensional simple modules of ${\rm U}_{q}(\gl_{m,n})$  
 is  the same  \cite[Theorem 4.1]{CWZ} as that for the quantum general linear supergroup ${\rm U}_q(\mathfrak{gl}_{m|n})$ at generic $q$ \cite{Z93}. 
 This  connection is particularly remarkable,  as ${\rm U}_q(\mathfrak{gl}_{m,n})$ is merely 
 an ordinary Hopf algebra containing no odd subspace. Unlike quantum supergroups, 
 ${\rm U}_q(\mathfrak{gl}_{m,n})$ does not arise as a deformation of the universal enveloping algebra 
 of any Lie (super)algebra. Possible connections between ${\rm U}_q(\mathfrak{gl}_{m|n})$ and ${\rm U}_q(\mathfrak{gl}_{m, n})$, 
and between $B$, $C$ and $D$ types of quantum supergroups and degenerate quantum groups 
of the corresponding types \cite[\S 6.1]{CWZ},  were discussed in \cite[\S 6.3]{CWZ}. It indicates that 
degenerate quantum groups may provide a new way to study the theory of quantum supergroups.

In this paper, we are mainly concerned with the centre of the degenerate quantum general linear group ${\rm U}_q(\mathfrak{gl}_{m,n})$.  As a first step,  we construct central elements of ${\rm U}_q(\mathfrak{gl}_{m,n})$ using the method of R. B. Zhang, Gould and Bracken \cite{Z2,ZGB1,ZGB2}. This approach provides a systematic technique for constructing  an infinite family of central elements associated with any finite-dimensional ${\rm U}_q(\mathfrak{gl}_{m,n})$-module. The construction is based on a commutation condition derived from the $L$-operators $L^{\pm}$ of ${\rm U}_q(\mathfrak{gl}_{m,n})$.  We provide explicit constructions of these $L$-operators and, in particular, derive an explicit formula for the quantum Casimir element of ${\rm U}_q(\mathfrak{gl}_{m,n})$. 

Previous works \cite{DZ,L,LXZ} employed a similar method to construct explicit generators and relations for the centre of the Drinfeld-Jimbo quantum (super)group \cite{DZ,L,LXZ,Z2,ZGB1}. It is important to note that  degenerate quantum  groups  are ordinary Hopf algebras rather than quantum deformations of any Lie (super)algebras. A subtle aspect of our construction  is the crucial role played by the element $K_{2\rho}$ (see \eqnref{eq: K2rho}). In the context of quantum groups, this element is typically indexed by the  sum of positive roots; however, here $K_{2\rho}$ appears in a nontrivial manner that ensures consistency with the underlying Hopf algebra structure. As a consequence,   the explicit formula for the quantum Casimir element of $\mathrm{U}_q(\mathfrak{gl}_{m,n})$ (see \eqnref{eq: Cas1} and \eqnref{eq: Cas2}) intriguingly depends on the parity of $m + n$.

Additionally, we construct a one-parameter family of operators  $L(x)\in {\rm U}_q(\mathfrak{gl}_{m,n})\otimes {\rm End}_{\mathbb{C}(q)}(V)$, where $x\in \mathbb{C}^*$ and  $V$ is the natural module of ${\rm U}_q(\mathfrak{gl}_{m,n})$,  and  prove that $L(x)$ satisfies  the quantum Yang-Baxter equation.   These solutions are useful for  constructing new integrable lattice models of quantum systems \cite{SZ}, such as the  celebrated  six-vertex  model and the  related  spin-$\frac{1}{2}$  XXZ   quantum  chain.    In the course of proof, we essentially utilise the RLL relations, which motivate us to formulate the RLL realisation of  ${\rm U}_q(\mathfrak{gl}_{m,n})$. This provides an alternative approach to studying the structure and representation theory  of  ${\rm U}_q(\mathfrak{gl}_{m,n})$.

Spectral parameter $L$-operators and the RLL realisation have long been central to the study of the quantum groups and supergroups \cite{RTF}. It is interesting to see that our construction  exhibits  similarities  to the quantum supergroup case \cite{Z2,Z98}. However,  the  algebraic structure studied here is essentially different.  Consequently, none of our main results follows by analogy alone, and each must be established through careful, explicit analysis.   We anticipate that these similarities will deepen  understanding of quantum supergroup structures and provide new tools for their representation theory.  We will pursue these connections in future work.

The paper is organised as follows. In \secref{sec: pre}, we recall the definition of the degenerate quantum general linear group ${\rm U}_{q}(\gl_{m,n})$ and  prove  some useful commutation relations. In \secref{sec: centre}, we first give a general method to construct  central elements of ${\rm U}_{q}(\gl_{m,n})$, and then use $L$-operators to construct an explicit infinite family of central elements. Our main results are given in Theorem \ref{thm-gen-center} and \thmref{thm-center}. In \secref{sec: Lop}, we construct a spectral parameter-dependent universal $L$-operator. Finally in \secref{sec: RLL}, we present the RLL realisation of the  degenerate quantum general linear group.

\noindent {\bf Acknowledgements.} The authors are grateful to Professor Ruibin Zhang for many stimulating discussions and helpful suggestions. The first author is supported by the National Natural Science Foundation of China (Grant No. 12071276).

\section{The degenerate quantum general linear group}\label{sec: pre}

Let $\C$ be the  field of complex numbers, and  $\Z_+$ be the set of non-negative integers. We set $\mathbb{C}^*:= \mathbb{C}\backslash \{0\}$.  Throughout the paper, we work over $\mathcal{K}=\C (q)$,  the field  of rational functions in the indeterminate $q$.
We fix a pair of positive integers $m,n$. Let ${\bf I}_{m,n}=\{1,2,\dots,m+n\}$ and ${\bf I}_{m,n}'={\bf I}_{m,n}\setminus \{m+n\}$. Put $p=-q^{-1}$, and let $q_a=q$ if $a\le m$, and $q_a=p$ if $a> m$.

\subsection{The degenerate quantum general linear group}
We recall the definition from \cite{CWZ}. The degenerate quantum general linear group
${\rm U}_{q}(\gl_{m,n})$ is  a unital associative algebra over $\mathcal{K}$
generated by the elements $e_a,f_a,K_b,K_b^{-1}, a\in {\bf I}_{m,n}', b\in
{\bf I}_{m,n}$, subject to the following relations
\begin{align}
& K_aK_a^{-1}=K_a^{-1}K_a=1,\quad K_a^{\pm 1}K_b^{\pm1}=K_b^{\pm1}K_a^{\pm 1},\\
& K_ae_bK_a^{-1}=q_a^{\d_{ab}-\d_{a,b+1}}e_b,\\
& K_af_bK_a^{-1}=q_a^{-\d_{ab}+\d_{a,b+1}}f_b, \\
& e_af_b-f_be_a=\d_{ab}\frac{k_{a}-k_{a}^{-1}}{q_{a}-q_{a}^{-1}}, \text{\ with\ } k_a=K_aK_{a+1}^{-1},\\
& e_ae_b=e_be_a,\quad f_af_b=f_bf_a,\quad |a-b|>1,\\
& e_a^2e_{a\pm1}-(q_a+q_a^{-1})e_ae_{a\pm1}e_a+e_{a\pm1}e_a^2=0,\quad a\neq m,\\
& f_a^2f_{a\pm1}-(q_a+q_a^{-1})f_af_{a\pm1}f_a+f_{a\pm1}f_a^2=0,\quad a\neq m,\\
& e_m^2=f_m^2=0, \\
&e_mE_{m-1,m+2}-E_{m-1,m+2}e_m=0, \label{eq: eE} \\
& f_mE_{m+2,m-1}-E_{m+2,m-1}f_m=0, \label{eq: fE}
\end{align}
where $E_{m-1,m+2}$ and $E_{m+2,m-1}$ are defined by
\[
\begin{aligned}
& E_{m-1,m+2}:=E_{m-1,m+1}e_{m+1}-q_{m+1}^{-1}e_{m+1}E_{m-1,m+1},\\
& E_{m+2,m-1}:=f_{m+1}E_{m+1,m-1}-q_{m+1}E_{m+1,m-1}f_{m+1},\\
& E_{m-1,m+1}:=e_{m-1}e_{m+1}-q_{m}^{-1}e_{m}e_{m-1},\\
& E_{m+1,m-1}:=f_{m}f_{m-1}-q_{m}f_{m-1}f_{m}.
\end{aligned}
\]
Denote by ${\rm U}_q(\mathfrak{sl}_{m,n})$ the subalgebra  of ${\rm U}_q(\mathfrak{gl}_{m,n})$ generated by $k_a^{\pm 1}, e_a, f_a$ for all $a\in {\bf I}_{m,n}'$.

Note that if $m=1$ or $n=1$, relations \eqref{eq: eE} and \eqref{eq: fE} do not exist. For any $a\neq m$, the elements $e_a, f_a, k_a^{\pm 1}$ generate a copy of ${\rm U}_q(\mathfrak{sl}_2)$. If $a=m$, the elements $k_{m}^{\pm}, e_m, f_m$ generate a subalgebra isomorphic to ${\rm U}_q(\mathfrak{sl}_{1,1})$  with relations
\begin{equation}\label{eq: Zac}
\begin{aligned}
&k_mk_m^{-1}=k_m^{-1}k_m=1, \quad k_m e_mk_m^{-1}= -e_m, \quad k_m f_mk_m^{-1}= -f_m, \\
&e_mf_m-f_me_m= \frac{k_m-k_m^{-1}}{q-q^{-1}}, \quad e_m^2=f_m^2=0.
\end{aligned}
\end{equation}
This is Zachos' algebra \cite{Za}, whose defining relations are similar to these of the quantum supergroup ${\rm U}_q(\mathfrak{sl}_{1|1})$ but without $\mathbb{Z}_2$-grading.  However, despite the resemblance,  Zachos' algebra is not a quantum deformation of any Lie algebra or Lie superalgebra \cite{CWZ,Za}.

As with the usual Drinfeld-Jimbo quantum group, the degenerate quantum group ${\rm U}_q(\mathfrak{gl}_{m,n})$ has a Hopf algebra structure with a coproduct $\D:{\rm U}_{q}(\gl_{m,n})\to {\rm U}_{q}(\gl_{m,n})\otimes {\rm U}_{q}(\gl_{m,n})$,
a counit $\epsilon:{\rm U}_{q}(\gl_{m,n})\to \mathcal{K}$, and an antipode $S:{\rm U}_{q}(\gl_{m,n})\to {\rm U}_{q}(\gl_{m,n})$, which are defined, respectively, by
\begin{equation}\label{def-bialg}
\D(e_a)=e_a\otimes k_a+1\otimes e_a,\quad \D(f_a)=f_a\otimes 1+ k_a^{-1}\otimes f_a,\quad \D(K_b)=K_b\otimes K_b,
\end{equation}
$$\epsilon(e_a)=\epsilon(f_a)=0,\quad \epsilon(K_b)=1,$$
and
\begin{equation}\label{eq: antipode}
S(e_a)=-e_ak_a^{-1},\quad S(f_a)=-k_a f_a,\quad S(K_b)=K_b^{-1}
\end{equation}
for all $a\in {\bf I}_{m,n}', b\in {\bf I}_{m,n}$. We define the  opposite coproduct $ \Delta'$ by
\[
\Delta':= \sigma \Delta,
\]
where $\sigma(u_1\otimes u_2)= u_2\otimes u_1$ for any $u_1,u_2 \in {\rm U}_q(\mathfrak{gl}_{m,n})$.

Let $V= \mathcal{K}^{m+n}$, and let $v_a\in V$ ($a\in {\bf I}_{m,n}$) be the column vector with $1$ at the $a$-th entry and $0$ elsewhere. Let $e_{ab}, a,b\in {\bf I}_{m,n}, $ be the matrix units such that $e_{ab} v_c= \delta_{bc}v_a$ for all $a,b,c\in {\bf I}_{m,n}$. Then there is a natural representation $\pi:{\rm U}_{q}(\gl_{m,n})\to \End_{\mathcal{K}}(V)$ \cite[Lemma 4.3]{CWZ}, defined by
\begin{equation}\label{eq: nat}
\pi(e_a)=e_{a,a+1},\quad \pi(f_a)=e_{a+1,a},\quad \pi(K_b)=I+(q_b-1)e_{bb}, \quad a\in {\bf I}_{m,n}', b\in {\bf I}_{m,n},
\end{equation}
where $I=\sum_{a\in \mathbf{I}_{m,n}}e_{aa}$ denotes the identity matrix. 
For any $u\in {\rm U}_{q}(\gl_{m,n})$, we have $$uv_a=\sum_{b\in {\bf I}_{m,n}}\pi(u)_{ba}v_b,\quad a\in {\bf I}_{m,n}.$$ 

The classification of  finite dimensional simple ${\rm U}_q(\mathfrak{gl}_{m,n})$-modules is  essentially the same as that for the quantum general linear supergroup ${\rm U}_q(\mathfrak{gl}_{m|n})$ (or $\mathfrak{gl}_{m|n}$) \cite[Theorem 4.1]{CWZ}.

\subsection{Commutation relations}

For any $a,b\in {\bf I}_{m,n}$, let
\[
E_{a,a+1}={\bar E}_{a,a+1}=e_a, \quad  E_{a+1,a}={\bar E}_{a+1,a}=f_a.
\]
We define recursively the following elements of
${\rm U}_{q}(\gl_{m,n})$:
\begin{equation}\label{def-E}
E_{ab}=\begin{cases}
E_{ac}E_{cb}-q_c^{-1}E_{cb}E_{ac}, & a<c<b,\\
E_{ac}E_{cb}-q_cE_{cb}E_{ac}, & b<c<a, \end{cases}
\end{equation}
and
\begin{equation}\label{def-E'}
\bar{E}_{ab}=\begin{cases}
\bar{E}_{ac}\bar{E}_{cb}-q_c\bar{E}_{cb}\bar{E}_{ac}, & a<c<b,\\
\bar{E}_{ac}\bar{E}_{cb}-q_c^{-1}\bar{E}_{cb}\bar{E}_{ac}, & b<c<a. \end{cases}
\end{equation}
We will give some commutation relations among these elements. These relations will be used in the construction of central elements of ${\rm U}_q(\mathfrak{gl}_{m,n})$.

\begin{lem}\label{lem: KErel}
	For any $a\neq b\in {\bf I}_{m,n}$,  the following relations hold in  ${\rm U}_{q}(\gl_{m,n})$:
	\begin{align}
	&K_cE_{ab}=E_{ab}K_c, \quad  c\neq a,b, \label{eq:KE1}\\
	&K_aE_{ab}=q_aE_{ab}K_a,   \quad
	K_bE_{ab}=q_b^{-1}E_{ab}K_b.   \label{eq:KE3}
	\end{align}
	These relations similarly apply to $\bar{E}_{ab}$ mutatis mutandis.
\end{lem}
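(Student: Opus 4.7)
The plan is to prove the three identities by induction on $|a-b|\ge 1$, treating the cases $a<b$ and $a>b$ separately.

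In the base case $|a-b|=1$, the element $E_{ab}$ is either $e_{\min(a,b)}$ or $f_{\min(a,b)}$, so each of the three claims reduces directly to one of the defining axioms $K_c e_d K_c^{-1}=q_c^{\delta_{cd}-\delta_{c,d+1}}e_d$ or $K_c f_d K_c^{-1}=q_c^{-\delta_{cd}+\delta_{c,d+1}}f_d$. A short substitution of the relevant indices confirms that the scalars $q_a$, $q_b^{-1}$, and $1$ are produced in the three respective cases.

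For the inductive step with $a<b$ and $b-a\ge 2$, fix any intermediate index $c$ with $a<c<b$ and use the recursion
\[
E_{ab}=E_{ac}E_{cb}-q_c^{-1}E_{cb}E_{ac}.
\]
By the induction hypothesis, $K_a E_{ac}=q_a E_{ac}K_a$, $K_c E_{ac}=q_c^{-1} E_{ac}K_c$, and $K_d E_{ac}=E_{ac}K_d$ for $d\notin\{a,c\}$; the analogous statements hold for $E_{cb}$. Pushing each $K_d$ through both summands of the recursion then yields the desired identity in each of the subcases $d=a$, $d=b$, and $d\notin\{a,b,c\}$, since in each case a single nontrivial $q$-factor arises from exactly one of the two factors in each product. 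The only subcase that is not immediately transparent is $d=c$: here $K_c$ picks up a combined scalar $q_c^{-1}\cdot q_c=1$ on each summand of the recursion, so $K_c$ commutes with $E_{ab}$, in agreement with \eqref{eq:KE1} since $c\ne a,b$.

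The case $a>b$ is treated identically, using the $f$-recursion in \eqref{def-E}, and the relations for $\bar E_{ab}$ follow by the same argument, because changing the scalar $q_c^{\pm 1}$ inside the recursion to $q_c^{\mp 1}$ only alters a multiplicative constant on a term whose $K_d$-passage is independent of that constant. The single mild technical point is the cancellation $q_c\cdot q_c^{-1}=1$ in the $d=c$ subcase, which reflects the fact that $E_{ac}$ and $E_{cb}$ carry opposite $K_c$-weights; beyond this piece of bookkeeping I do not expect any deeper obstacle.
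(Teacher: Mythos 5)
Your proof is correct and follows essentially the same route as the paper: both arguments reduce to tracking the scalar $K_d$ picks up when pushed past each generator $e_i$ or $f_i$ occurring in $E_{ab}$, with the key cancellation $q_c^{-1}\cdot q_c=1$ at the intermediate index. The paper flattens $E_{ab}$ into a linear combination of monomials $e_{i_1}\cdots e_{i_{b-a}}$ and computes the total scalar at once, whereas you organize the same bookkeeping as an induction on the recursion; the difference is purely presentational.
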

\begin{proof}
	We prove the first relation. We may assume that $a<b$, and the case $a>b$ can be treated similarly. By definition (\ref{def-E})  $E_{ab}$ (resp.
	$E_{ba}$) is a linear combination of elements of the form
	$e_{i_1}e_{i_{2}}\cdots e_{i_{b-a}}$ (resp.
	$f_{i_1}f_{i_{2}}\cdots f_{i_{b-a}}$), where $i_1, i_2, \dots, i_{b-a}$ form a permutation of $a,a+1,\dots,b-1$. Assuming that $a<c<b$, we obtain
	$K_ce_{c-1}=q_c^{-1}e_{c-1}K_c$, $K_ce_{c}=q_ce_{c}K_c$
	and $K_ce_{i}=e_{i}K_c$ for $i\neq c, c-1$. Then $K_cE_{ab}=E_{ab}K_c$ for $a<c<b$. Similarly,  (\ref{eq:KE1})
	holds for the cases $a<b<c$ and $c<a<b$. Relations in  (\ref{eq:KE3}) can be proved similarly.
\end{proof}

\begin{lem}\label{eE}
	We have the following relations:
	\begin{align}
	&[e_{a},E_{ba}]=-k_{a}E_{b,a+1}, \quad a+1<b, \label{eq:11}\\
	& [f_{a},E_{ab}]=E_{a+1,b}k_{a}^{-1},\quad a+1<b, \label{eq: 111}  \\
	&[e_{b},E_{b+1,a}]=E_{ba}k_{b}^{-1}, \quad a<b, \label{eq: 112} \\
	& [f_{b},E_{a,b+1}]=-k_{b} E_{ab}, \quad a<b, \label{eq:12}\\
	& [e_a,E_{bc}]=[f_a,E_{cb}]=0,\quad b<c, \{b,c\}\neq \{a,a+1\}. \label{eq:13}
	\end{align}
\end{lem}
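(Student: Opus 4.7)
The plan is to deduce these five identities from the recursive definition \eqref{def-E} of the elements $E_{ab}$, combined with \lemref{lem: KErel} and the observation (noted in its proof) that for $a<b$, $E_{ab}$ (resp.\ $E_{ba}$) is a polynomial in $e_a, e_{a+1}, \ldots, e_{b-1}$ (resp.\ $f_a, f_{a+1}, \ldots, f_{b-1}$).

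I would first establish \eqref{eq:13}. Since $[e_a,e_i]=0$ for $|i-a|>1$ and $[e_a,f_i]=\delta_{ai}h_a$, where $h_a := (k_a-k_a^{-1})/(q_a-q_a^{-1})$, and analogously for $f_a$, a case analysis on the position of $a$ relative to the range $\{b,b+1,\ldots,c-1\}$ under the stated hypothesis shows that $e_a$ (resp.\ $f_a$) commutes with every Chevalley generator appearing in the expansion of $E_{bc}$ (resp.\ $E_{cb}$), and hence with the whole element.

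Next I would handle \eqref{eq:11} by expanding $E_{ba}$ via \eqref{def-E} with the intermediate index $a+1$:
\[
E_{ba} \;=\; E_{b,a+1}\,f_a \;-\; q_{a+1}\,f_a\,E_{b,a+1}.
\]
Applying $[e_a,\,\cdot\,]$ by the Leibniz rule and using $[e_a,E_{b,a+1}]=0$ (immediate, since $E_{b,a+1}$ is a polynomial in $f_{a+1},\ldots,f_{b-1}$ and $[e_a,f_i]=0$ for $i\ne a$), one obtains
\[
[e_a,E_{ba}] \;=\; E_{b,a+1}\,h_a \;-\; q_{a+1}\,h_a\,E_{b,a+1}.
\]
\lemref{lem: KErel} then yields $k_a^{\pm 1}E_{b,a+1}=q_{a+1}^{\pm 1}E_{b,a+1}k_a^{\pm 1}$, so upon substituting the definition of $h_a$ and collecting terms, the $k_a^{-1}$ contributions cancel and the $k_a$ term collapses to $-k_a E_{b,a+1}$ using the crucial identity $q_a-q_a^{-1}=q_{a+1}-q_{a+1}^{-1}=q-q^{-1}$ (valid for every index, since $p=-q^{-1}$).

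Relations \eqref{eq: 111}, \eqref{eq: 112} and \eqref{eq:12} follow by the same technique: one chooses the intermediate index in \eqref{def-E} ($a+1$ for \eqref{eq: 111}, and $b$ for \eqref{eq: 112} and \eqref{eq:12}) so that the recursion exposes a single $[f_a,e_a]$ or $[e_b,f_b]$ (resp.\ $[f_b,e_b]$) bracket, and then pushes the resulting Cartan factor through the remaining $E$-factor using \lemref{lem: KErel}. The principal difficulty is scalar bookkeeping through the various $q$-commutators; the uniform identity $q_c-q_c^{-1}=q-q^{-1}$ (valid for both $q_c=q$ and $q_c=p=-q^{-1}$) ensures that all coefficients collapse correctly, even at the degenerate node $c=m$ where one works in ${\rm U}_q(\mathfrak{sl}_{1,1})$ rather than ${\rm U}_q(\mathfrak{sl}_2)$.
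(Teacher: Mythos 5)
Your arguments for \eqref{eq:11}--\eqref{eq:12} coincide with the paper's: expand $E_{ba}$ (resp.\ $E_{ab}$, $E_{b+1,a}$, $E_{a,b+1}$) through \eqref{def-E} at the extremal intermediate index so that exactly one bracket $[e_a,f_a]$ or $[e_b,f_b]$ survives, note that the long factor commutes outright because it is a polynomial in opposite-type generators $f_i$ (resp.\ $e_i$) with $i\neq a$, and then move $k_a^{\pm1}$ past that factor with \lemref{lem: KErel}, using $q_c-q_c^{-1}=q-q^{-1}$. This is correct and is exactly how the paper proves \eqref{eq:11}.

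The gap is in \eqref{eq:13}. The hypothesis $\{b,c\}\neq\{a,a+1\}$ only excludes the case where the root vector is $e_a$ or $f_a$ itself; it does \emph{not} prevent $a$ or $a\pm1$ from lying inside $\{b,\dots,c-1\}$, and in that situation your generator-by-generator argument breaks down. For instance, $E_{41}$ is a polynomial in $f_1,f_2,f_3$, and $f_2$ commutes with neither $f_1$ nor $f_3$; likewise $e_2$ does not commute with $f_2$, yet $[e_2,E_{41}]=0$ is a genuine cancellation: $[e_2,E_{41}]=[e_2,E_{42}]f_1-q_2f_1[e_2,E_{42}]=-k_2f_3f_1+q_2f_1k_2f_3=k_2(f_1f_3-f_3f_1)=0$, which uses \eqref{eq:11} and \lemref{lem: KErel}, not the vanishing of individual brackets. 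The correct proof of \eqref{eq:13} is an induction on $c-b$ through the recursion \eqref{def-E}, feeding in the already-established identities (and, at the node $m$, the modified Serre relations); this is precisely why the paper defers to Lemma~3.8 of \cite{CWZ} rather than arguing factor by factor. A further caution: the pairing that is actually needed later in the paper (and that the cited lemma supplies) is $e_a$ against the \emph{lowering} vectors and $f_a$ against the \emph{raising} ones; taken literally with $e_a$ against $E_{bc}$, the identity already fails for $[e_1,E_{13}]$, which the Serre relation shows equals $(q-1)E_{13}e_1\neq0$ when $m\geq2$. So under no reading can \eqref{eq:13} be reduced to the generators commuting one at a time.
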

\begin{proof}
	Let us prove (\ref{eq:11}) first. For any $a+1<b$, we have
	\begin{align*}
	[e_{a},E_{ba}]&=[e_{a}, E_{b,a+1}E_{a+1,a}-q_{a+1}E_{a+1,a}E_{b,a+1}]\\
	&=[e_a, E_{b,a+1}]f_a + E_{b,a+1}[e_a,f_a] - q_{a+1}([e_a,f_a]E_{b,a+1}+ f_a[e_a,E_{b,a+1}]).
	\end{align*}
	We can prove by induction  that $[e_a,E_{b,a+1}]=0$. It follows that
	\begin{align*}
	[e_{a},E_{ba}]&=E_{b,a+1}\frac{k_{a}-k_{a}^{-1}}{q_{a}-q_{a}^{-1}}-q_{a+1}\frac{k_{a}-k_{a}^{-1}}{q_{a}-q_{a}^{-1}}E_{b,a+1}=-k_{a}E_{b,a+1},
	\end{align*}
	where in the last equation we have used \lemref{lem: KErel}.
	Relations \eqref{eq: 111}, \eqref{eq: 112} and \eqref{eq:12} can be proved similarly. The proof of $[f_a,E_{cb}]=0$  in \eqref{eq:13} can be found in Lemma 3.8 \cite{CWZ}, and the first relation in \eqref{eq:13} can be proved similarly.
\end{proof}

\begin{lem} \label{lem: Erel}
	The following relations hold:
	\begin{eqnarray}
	&&E_{ca}^2=0,\quad  a\leq m<c, \label{eq:E10}\\
	&&{[E_{ab},E_{cd}]}=0, \quad  b<a<d<c \text{ or }  d<b<a<c, \label{eq:E11} \\
	&&E_{ac}E_{bc}=q_cE_{bc}E_{ac} , \quad E_{ca}E_{cb}=q_cE_{cb}E_{ca},\quad c<a<b,\label{eq:E13} \\
	&&E_{ca}E_{cb}=q_cE_{cb}E_{ca},\quad E_{ac}E_{bc}=q_cE_{bc}E_{ac} , \quad a<b<c,\label{eq:E15} \\
	&&{[E_{ab},E_{cd}]}=(q-q^{-1})E_{cb}E_{ad}, \quad  b<d<a<c \text{ or } a<c<b<d, \label{eq:E14}\\
	&&[E_{ab},E_{bc}]_{q_b^{-1}}=E_{ac}, \quad [E_{cb},E_{ba}]_{q_b}=E_{ca}\quad a<b<c. \label{eq:E12}
	\end{eqnarray}
\end{lem}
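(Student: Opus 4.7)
The plan is to prove all six relations by a single induction on the total length $(b-a)+(d-c)$ (with the obvious adaptation when some indices appear in reversed order), using the recursive definitions \eqref{def-E} and \eqref{def-E'} together with Lemmas \ref{lem: KErel} and \ref{eE} as the only basic inputs. At each inductive step I would peel off a single generator $e_i$ or $f_i$ from one of $E_{ab}, E_{cd}$ via the recursion, and commute it through the other factor using the bracket relations \eqref{eq:11}--\eqref{eq:13}.

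I would begin with \eqref{eq:E12}. Taking the intermediate index at the boundary (for instance $b=c-1$ so that $E_{bc}=e_{c-1}$, or $b=a+1$ so that $E_{ab}=e_a$) reduces $[E_{ab},E_{bc}]_{q_b^{-1}}=E_{ac}$ to the definition \eqref{def-E} directly. The general case then follows once one verifies that the recursion in \eqref{def-E} is independent of the choice of intermediate index, a short induction in which the Serre relations (and the specialised relations \eqref{eq: eE}, \eqref{eq: fE} at node $m$) enter. The q-commutations \eqref{eq:E13} and \eqref{eq:E15} then follow by expanding one factor via \eqref{def-E} and applying \eqref{eq:E12}. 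The disjoint-interval vanishing \eqref{eq:E11} is obtained similarly: peeling off a generator, the index gap forces its bracket with the remaining root vector to vanish by \eqref{eq:13}, and the induction closes.

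The nilpotency \eqref{eq:E10} is proved by a separate induction on $c-a$, with base case $e_m^2=f_m^2=0$. In the inductive step I would choose the intermediate index in \eqref{def-E} to lie at $m$ or $m+1$, so that expanding $E_{ca}^2$ produces squared factors $E_{bm}^2$ or $E_{m+1,a}^2$ (which vanish by the induction hypothesis) together with cross terms that can be disposed of using the q-commutations \eqref{eq:E13} and \eqref{eq:E15} already in hand.

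The main obstacle is the crossing relation \eqref{eq:E14}. The correction $(q-q^{-1})E_{cb}E_{ad}$ is precisely the residue arising when one root vector is passed through another whose interval overlaps it, and its derivation requires careful bookkeeping. My approach would be to expand the shorter of $E_{ab},E_{cd}$ using a recursive intermediate index lying in the overlap, apply \eqref{eq:E12} and the q-commutations already established, and finally identify the residual term. The delicate point is to arrange the simultaneous induction so that \eqref{eq:E14} is itself invoked only for strictly smaller indices at each step; once this is set up, the calculation proceeds mechanically. The analogous identities for the $\bar{E}_{ab}$ family follow by the parallel argument with $q_c$ and $q_c^{-1}$ interchanged throughout.
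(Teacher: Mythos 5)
Your overall strategy --- a self-contained induction on the lengths of the root vectors, peeling off simple generators via \eqref{def-E} and pushing them through with Lemmas \ref{lem: KErel} and \ref{eE} --- is sound and genuinely different from the paper's proof, which simply cites \cite[Lemma 3.9]{CWZ} for \eqref{eq:E10}, \eqref{eq:E11}, \eqref{eq:E14} and half of \eqref{eq:E13}--\eqref{eq:E15}, handles the remaining halves ``similarly'', and disposes of \eqref{eq:E12} with the single observation that $e_ae_c=e_ce_a$ and $f_af_c=f_cf_a$ for $|a-c|>1$. Carried out in full, your argument would make the lemma self-contained, which is a real gain. Your sketches for \eqref{eq:E11} and \eqref{eq:E14} are workable, and for \eqref{eq:E10} you can in fact dispense with the induction hypothesis on the squared sub-factors: writing $E_{ca}=XY-q_{m+1}YX$ with $X=E_{c,m+1}$, $Y=E_{m+1,a}$ and using only the same-row and same-column relations already in hand, one gets $E_{ca}^2=q_cq_a^{-1}E_{ca}^2$ with $q_cq_a^{-1}=-q^{-2}\neq 1$, which forces $E_{ca}^2=0$ directly.

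There is, however, one step that fails as written: \eqref{eq:E13} and \eqref{eq:E15} do \emph{not} follow from \eqref{eq:E12} by ``expanding one factor''. Take $E_{ac}E_{bc}=q_cE_{bc}E_{ac}$ with $c<a<b$ and substitute $E_{bc}=E_{ba}E_{ac}-q_aE_{ac}E_{ba}$; what remains to be proved is
\[
(1+q_aq_c)\,E_{ac}E_{ba}E_{ac}\;=\;q_a\,E_{ac}^2E_{ba}+q_c\,E_{ba}E_{ac}^2,
\]
a higher-order quantum Serre identity which is not a consequence of \eqref{eq:E12}. It needs its own induction whose base cases are precisely the defining Serre relations of ${\rm U}_q(\mathfrak{gl}_{m,n})$, together with $e_m^2=f_m^2=0$ and \eqref{eq: eE}, \eqref{eq: fE} when node $m$ is involved (note that at $a=m+1$, $c=m$ the left-hand coefficient $1+q_aq_c$ vanishes, matching the vanishing of $f_m^2$ on the right). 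You do invoke the Serre relations, but in the wrong place: the independence of the intermediate index in \eqref{def-E} --- your route to \eqref{eq:E12} --- requires only the commutativity of distant generators, exactly as the paper observes, whereas the same-row/same-column relations \eqref{eq:E13}, \eqref{eq:E15} are where the Serre relations and their degenerate replacements at node $m$ must actually be fed into the induction. (The closing remark about the $\bar E_{ab}$ family is harmless but unnecessary, since the lemma concerns only the $E_{ab}$.)
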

\begin{proof}
	The proof of relations \eqref{eq:E10}, \eqref{eq:E11} and the first relations of \eqref{eq:E13} and \eqref{eq:E15}, \eqref{eq:E14} can be found in \cite[Lemma 3.9]{CWZ}.	The second relations of \eqref{eq:E13} and \eqref{eq:E15} can be proved by using similar method as the first relations of \eqref{eq:E13} and \eqref{eq:E15}, respectively. From the fact that $e_ae_c=e_ce_a$ and $f_af_c=f_cf_a$ for $|a-c|>1$, we can easily prove \eqref{eq:E12}.
\end{proof}

\begin{lem}\label{S-act on E} For $a>b$, we have
	$$S(E_{ab})=-K_a^{-1}K_b\bar{E}_{ab}, \quad S(E_{ba})= -\bar{E}_{ba}K_aK_{b}^{-1}.$$
\end{lem}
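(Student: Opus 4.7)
The plan is to prove both identities simultaneously by induction on $d := |a-b|$, exploiting the recursive definitions \eqref{def-E} and \eqref{def-E'} together with the fact that $S$ is an algebra anti-homomorphism.

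For the base case $d=1$, one has $E_{b+1,b}=\bar{E}_{b+1,b}=f_b$ and $E_{b,b+1}=\bar{E}_{b,b+1}=e_b$. Using the definition \eqref{eq: antipode} of $S$ together with $k_b=K_bK_{b+1}^{-1}$ (and the fact that the $K_i$'s mutually commute), the claimed formulas reduce directly to $S(f_b)=-k_bf_b$ and $S(e_b)=-e_bk_b^{-1}$. This is a short unwinding of notation, so no real work is needed.

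For the inductive step, assume the formulas hold for all index pairs at distance strictly less than $d=|a-b|$, and suppose $a>b$. Choose any $c$ with $b<c<a$, so that \eqref{def-E} gives
\[
E_{ab}=E_{ac}E_{cb}-q_c\,E_{cb}E_{ac},\qquad E_{ba}=E_{bc}E_{ca}-q_c^{-1}\,E_{ca}E_{bc}.
\]
Apply $S$ to each, using that $S$ reverses products, and substitute the inductive expressions $S(E_{cb})=-K_c^{-1}K_b\bar{E}_{cb}$, $S(E_{ac})=-K_a^{-1}K_c\bar{E}_{ac}$ (and analogously the barred versions for $S(E_{bc})$, $S(E_{ca})$). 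Then use \lemref{lem: KErel} (applied to $\bar{E}$ mutatis mutandis) to commute every $K_i^{\pm 1}$ past the $\bar{E}$'s and collect all Cartan factors to the outside: the $K_c$'s cancel, and the remaining factors combine into $K_a^{-1}K_b$ (respectively $K_aK_b^{-1}$). The inner expression then takes the form $-(\bar{E}_{ac}\bar{E}_{cb}-q_c^{-1}\bar{E}_{cb}\bar{E}_{ac})$ in the first case and $-(\bar{E}_{bc}\bar{E}_{ca}-q_c\bar{E}_{ca}\bar{E}_{bc})$ in the second, which by \eqref{def-E'} equal $-\bar{E}_{ab}$ and $-\bar{E}_{ba}$ respectively.

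The main obstacle is the bookkeeping of the $q$-factors. The key point is that commuting $K_c^{\pm 1}$ past $\bar{E}_{ac}$ produces a factor $q_c^{\mp 1}$ (second-index rule of \lemref{lem: KErel}), while commuting it past $\bar{E}_{cb}$ produces $q_c^{\pm 1}$ (first-index rule); the inverse exponent $q_c$ vs.\ $q_c^{-1}$ appearing in \eqref{def-E} compared with \eqref{def-E'} is precisely what is needed to reconcile the two structure constants, which is why $E_{ab}$ transforms into $\bar{E}_{ab}$ (rather than into itself) under $S$. No further identities beyond the elementary relations of \lemref{lem: KErel} are required.
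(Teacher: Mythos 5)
Your proof is correct and follows essentially the same route as the paper: induction on $|a-b|$, applying the anti-homomorphism $S$ to the recursive definition \eqref{def-E}, substituting the inductive hypothesis, and using \lemref{lem: KErel} to collect the Cartan factors, with the $q_c^{\pm1}$ bookkeeping working out exactly as you describe. The only cosmetic differences are that the paper fixes the intermediate index $c=a-1$ and writes out only the first identity in detail.
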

\begin{proof}
	We prove the first equation by using induction on $a-b$, and similarly for the second. If $a-b=1$, by \eqref{eq: antipode} we have
	\[
	S(E_{a,a-1})=-k_{a-1}f_{a-1}=-K_a^{-1}K_{a-1}\bar{E}_{a,a-1}.
	\]
	Assume the formula  for $S(E_{a-1,b})$ holds for $a-1>b$. Using definition \eqref{def-E} we obtain that
	\[
	\begin{aligned}
	& S(E_{ab})=S(E_{a-1,b})S(E_{a,a-1})-q_{a-1}S(E_{a,a-1})S(E_{a-1,b})\\
	={}& K_{a-1}^{-1}K_b\bar{E}_{a-1,b}K_a^{-1}K_{a-1}\bar{E}_{a,a-1}-q_{a-1}K_a^{-1}K_{a-1}\bar{E}_{a,a-1}K_{a-1}^{-1}K_b\bar{E}_{a-1,b}\\
	={}& q_{a-1}^{-1}K_a^{-1}K_b\bar{E}_{a-1,b}\bar{E}_{a,a-1}-K_a^{-1}K_b\bar{E}_{a,a-1}\bar{E}_{a-1,b}\\
	={}& -K_a^{-1}K_b\bar{E}_{ab},
	\end{aligned}
	\]	
	where the second equation follows from the induction hypothesis and the third equation is a consequence of \lemref{lem: KErel}.
\end{proof}

\section{Central elements of \texorpdfstring{${\rm U}_{q}(\gl_{m,n})$}{Uq(glmn)}}\label{sec: centre}

We will begin by using the partial trace technique to construct central elements of ${\rm U}_q(\mathfrak{gl}_{m,n})$ associated with an arbitrary finite-dimensional ${\rm U}_q(\mathfrak{gl}_{m,n})$-module. Subsequently, we provide an explicit construction of central elements associated with the natural module. In particular, this yields an explicit quantum Casimir element of ${\rm U}_q(\mathfrak{gl}_{m,n})$. 

\subsection{A general construction}
Although ${\rm U}_q(\mathfrak{gl}_{m,n})$ is not a quantum deformation of any existing Lie algebra or Lie superalgebra, we will show in this section that the method for constructing central elements for quantum (super) groups, as given in \cite{DZ,ZGB1}, is applicable to ${\rm U}_q(\mathfrak{gl}_{m,n})$. This is made possible due to its Hopf algebra structure.

To start with, let $M$ be an arbitrary finite dimensional ${\rm U}_{q}(\gl_{m,n})$-module, and let $\zeta:{\rm U}_{q}(\gl_{m,n})\to {\rm End}_{\mathcal{K}}(M)$ be the corresponding representation of ${\rm U}_{q}(\gl_{m,n})$. Denote by ${\rm Tr}_2$ the partial trace on the second tensor factor of ${\rm U}_q(\mathfrak{gl}_{m,n})\otimes {\rm End}_{\mathcal{K}}(M)$, i.e.,
\[ {\rm Tr}_2(u\otimes A)={\rm Tr}(A)u, \quad \forall u\in {\rm U}_q(\mathfrak{gl}_{m,n}), A\in {\rm End}_{\mathcal{K}}(M), \]
where ${\rm Tr}$ denotes the usual trace.

An important ingredient in the construction is the  element $K_{2\rho}$.  In the context of  quantum  groups, $2\rho$ denotes the sum of positive roots and $K_{2\rho}$ is merely a product of  $K_{\alpha}$ over all positive roots $\alpha$. However, when dealing with  ${\rm U}_q(\mathfrak{gl}_{m,n})$, it is crucial to define $K_{2\rho}$ properly to ensure consistency with the antipode $S$.    Recall from \cite[Lemma 5.1]{CWZ} that $K_{2\rho}\in {\rm U}_q(\mathfrak{gl}_{m,n})$ is defined by
\begin{equation}\label{eq: K2rho}
K_{2\rho}=\begin{cases}
K_{2\rho}', \quad & \text{if $m+n$ is even},\\
K_{2\rho}'K', \quad &\text{if $m+n$ is odd},
\end{cases}
\end{equation}
where
\[
K_{2\rho}'=\prod_{a=1}^m K_a^{m-n+1-2a}\prod_{b=1}^n K_{m+b}^{m+n+1-2b}, \quad K'= \prod_{a=1}^mK_a \prod_{b=1}^n K^{-1}_{m+b}.
\]
This invertible element satisfies
\[
S^2(u)= K_{2\rho} u K_{2\rho}^{-1}, \quad \forall u\in {\rm U}_q(\mathfrak{gl}_{m,n}),
\]
which is analogous to the usual quantum group case. In particular, it is straightforward to check the following useful relations:
\begin{equation}\label{eq: Ke}
\begin{aligned}
K_{2\rho}'e_aK_{2\rho}^{'-1}&=k_ae_ak_a^{-1}=q_a^2e_a, \quad \forall a\neq m, \\
K'e_aK^{'-1}&=e_a, \quad \forall a\neq m,\\
K_{2\rho}e_m K_{2\rho}^{-1}&=k_me_mk_m^{-1}=-e_m.
\end{aligned}
\end{equation}

Now we apply the partial trace technique (see, e.g., \cite[Proposition 1]{ZGB1}) to construct  central elements of ${\rm U}_q(\mathfrak{gl}_{m,n})$.

\begin{thm}\label{thm-gen-center}
	Let $\zeta: {\rm U}_q(\mathfrak{gl}_{m,n}) \rightarrow {\rm End}_{\mathcal{K}}(M)$ be a finite-dimensional representation, and let $\Gamma_M\in  {\rm U}_q(\mathfrak{gl}_{m,n})\otimes {\rm End}_{\mathcal{K}}(M)$ be an element satisfying
	\[[\Gamma_M, (1\otimes \zeta)\Delta(u)]=0, \quad \forall u\in {\rm U}_q(\mathfrak{gl}_{m,n}).\]
	Then the elements
	\[ C_{k}= {\rm Tr}_2((1\otimes \zeta(K_{2\rho}))\Gamma_M^k), \quad k\geq 1 \]
	belong to the centre of ${\rm U}_q(\mathfrak{gl}_{m,n})$.
\end{thm}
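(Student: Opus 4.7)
My plan is to prove $[C_k,u]=0$ for every $u\in {\rm U}_q(\mathfrak{gl}_{m,n})$ by the standard trick used for quantum (super) groups in \cite{ZGB1,DZ}: move $u$ into the partial trace, split it using the antipode axiom, apply the commutation hypothesis to swap $\Gamma_M^k$ past a piece of $(1\otimes\zeta)\Delta(u)$, and close up by invoking $S^2={\rm Ad}_{K_{2\rho}}$ together with cyclicity of the scalar trace. An obvious induction on $k$ promotes $[\Gamma_M,(1\otimes\zeta)\Delta(u)]=0$ to $[\Gamma_M^k,(1\otimes\zeta)\Delta(u)]=0$ for all $k\geq 1$, which I will use freely throughout.

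\textbf{Key step.} Writing $\Delta(u)=\sum u_{(1)}\otimes u_{(2)}$ and $\Delta^2(u)=\sum u_{(1)}\otimes u_{(2)}\otimes u_{(3)}$ in Sweedler notation, the antipode axiom applied to the last two tensor slots yields $\sum u_{(2)}S(u_{(3)})=\epsilon(u_{(2)})\cdot 1$, so that
\[
u\otimes 1=\sum (u_{(1)}\otimes\zeta(u_{(2)}))\,(1\otimes\zeta(S(u_{(3)}))).
\]
Substituting this into $C_k u={\rm Tr}_2\bigl((1\otimes\zeta(K_{2\rho}))\Gamma_M^k(u\otimes 1)\bigr)$ and using the commutation to move $\Gamma_M^k$ past $u_{(1)}\otimes\zeta(u_{(2)})$ gives
\[
C_k u=\sum {\rm Tr}_2\bigl((u_{(1)}\otimes\zeta(K_{2\rho}u_{(2)}))\,\Gamma_M^k\,(1\otimes\zeta(S(u_{(3)})))\bigr).
\]
Cyclicity of the scalar trace on the second tensor factor transfers $\zeta(S(u_{(3)}))$ to the far left, and the identity $K_{2\rho}u_{(2)}=S^2(u_{(2)})K_{2\rho}$ then combines $\zeta(S(u_{(3)}))\zeta(S^2(u_{(2)}))=\zeta(S(S(u_{(2)})u_{(3)}))$. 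This rewrites
\[
C_k u=\sum u_{(1)}\,\phi\bigl(S(u_{(2)})u_{(3)}\bigr),\qquad \phi(v):={\rm Tr}_2\bigl((1\otimes\zeta(S(v)K_{2\rho}))\,\Gamma_M^k\bigr),
\]
where $\phi$ is a linear map ${\rm U}_q(\mathfrak{gl}_{m,n})\to {\rm U}_q(\mathfrak{gl}_{m,n})$ satisfying $\phi(1)=C_k$.

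\textbf{Conclusion.} To finish, I apply $1\otimes\phi$ followed by multiplication to the Hopf-algebraic identity $\sum u_{(1)}\otimes S(u_{(2)})u_{(3)}=u\otimes 1$ (immediate from the antipode axiom applied to the inner two slots of $\Delta^2(u)$). The left-hand side collapses to $\sum u_{(1)}\phi(S(u_{(2)})u_{(3)})=C_k u$, the right-hand side collapses to $u\,\phi(1)=uC_k$, and therefore $C_k u=uC_k$. The main difficulty is purely combinatorial: juggling the three Sweedler indices during the cyclic-trace rearrangement so that the combination $S(u_{(2)})u_{(3)}$ materialises in precisely the slot where the antipode axiom can collapse it. The role of $K_{2\rho}$ is essential throughout, as it is the only ingredient that relates $\zeta\circ S^2$ to conjugation on the module side and makes the final collapse possible.
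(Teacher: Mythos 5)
Your proof is correct, and it takes a genuinely different route from the paper's. The paper argues generator by generator: it expands $0={\rm Tr}_2\bigl((1\otimes \zeta(k_a^{-1}K_{2\rho}))[\Gamma_M^k,(1\otimes\zeta)\Delta(e_a)]\bigr)$ using the explicit coproduct $\Delta(e_a)=e_a\otimes k_a+1\otimes e_a$, isolates $[C_k,e_a]$ plus a remainder of traces, and kills the remainder by the explicit conjugation identities \eqref{eq: Ke} (with a separate sign computation for the degenerate node $a=m$), then repeats for $f_a$ and $K_b^{\pm1}$. You instead prove $C_ku=uC_k$ for arbitrary $u$ in one stroke via the abstract Hopf-algebra machinery: coassociativity, the antipode/counit axioms, anti-multiplicativity of $S$, cyclicity of ${\rm Tr}$, and only the abstract identity $S^2={\rm Ad}_{K_{2\rho}}$ quoted from \cite[Lemma 5.1]{CWZ}. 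All the ingredients you invoke are available for ${\rm U}_q(\gl_{m,n})$ (it is an honest Hopf algebra, and the implication $[\Gamma_M,X]=0\Rightarrow[\Gamma_M^k,X]=0$ is immediate), and your Sweedler bookkeeping — in particular commuting $\Gamma_M^k$ past $(1\otimes\zeta)\Delta(u_{(1)})$ with $S(u_{(2)})$ as spectator, which is legitimate by coassociativity — is sound. Your argument is more conceptual and would transfer verbatim to any Hopf algebra equipped with such a $K_{2\rho}$; the paper's computation is more elementary and makes visible exactly how the sign at the $m$-th node enters, which is the one genuinely new feature of the degenerate setting.
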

\begin{proof}
	For any integer $k\geq 1$, it suffices to show that $C_k$ commutes with the generators $e_a,f_a, K_b^{\pm 1}$, $a\in {\bf I}_{m,n}', b\in {\bf I}_{m,n}$. Suppose that $\Gamma_M^k= \sum_{i}x_i\otimes X_i$ is a finite sum for some $x_i\in  {\rm U}_q(\mathfrak{gl}_{m,n})$ and $X_i\in {\rm End}_{\mathcal{K}}(M)$.  Then for any $a\in {\bf I}_{m,n}'$,  we have $[\Gamma_M^k,(1\otimes \zeta)\D(e_a)]=0$.  It follows that
	\[\begin{aligned}
	0&= {\rm Tr}_2\Big ( (1\otimes \zeta(k_a^{-1}K_{2\rho}))[\Gamma_M^k,(1 \otimes \zeta)\Delta(e_a) ]\Big)\\
	&=\sum_i {\rm Tr}_2\Big ( (1\otimes \zeta(k_a^{-1}K_{2\rho})) [x_i\otimes X_i, e_a\otimes \zeta(k_a)+1\otimes \zeta(e_a) ]\Big) \\
	&=\sum_i (x_ie_a-e_ax_i){\rm Tr}(\zeta(K_{2\rho})X_i) + \sum_ix_i\Big( {\rm Tr}(\zeta(k_a^{-1}K_{2\rho})X_i\zeta(e_a )) - {\rm Tr}( \zeta(k_a^{-1}K_{2\rho}e_a)X_i) \Big)\\
	&= [C_k, e_a]+ \sum_ix_i\Big( {\rm Tr}(\zeta(e_ak_a^{-1}K_{2\rho})X_i) - {\rm Tr}( \zeta(k_a^{-1}K_{2\rho}e_a)X_i) \Big).
	\end{aligned}\]
	We will show that the sum in the last equation equals zero. This implies that $[C_k,e_a]=0$, proving that $C_k$ commutes with $e_a$.
	Note that for each term in the sum, we have
	\[
	{\rm Tr}(\zeta(e_ak_a^{-1}K_{2\rho})X_i)=
	\begin{cases}
	-{\rm Tr}(\zeta(k_m^{-1}e_mK_{2\rho})X_i), \quad & \text{if $a=m$}, \\
	q_a^2 {\rm Tr}(\zeta(k_a^{-1}e_aK_{2\rho})X_i), \quad & \text{if $a\neq m$}.
	\end{cases}
	\]
	If $a=m$, by the last equation of \eqref{eq: Ke} we have $K_{2\rho}e_m=-e_m K_{2\rho}$, whence for each $i$, 
	\[
	{\rm Tr}(\zeta(e_mk_m^{-1}K_{2\rho})X_i)= -{\rm Tr}(\zeta(k_m^{-1}e_mK_{2\rho})X_i)= {\rm Tr}(\zeta(k_m^{-1}K_{2\rho}e_m)X_i).
	\]
	Therefore, we have  $[C_k,e_m]=0$. If $a\neq m$, then
	\[
	e_aK_{2\rho}= \begin{cases}
	e_aK_{2\rho}'= q_a^{-2}K'_{2\rho}e_a=q_a^{-2}K_{2\rho}e_a , \quad & \text{if $m+n$ is even},\\
	e_aK_{2\rho}'K'= q_a^{-2}K'_{2\rho}e_aK'=q_a^{-2}K_{2\rho}e_a , \quad &\text{if $m+n$ is odd},
	\end{cases}
	\]
	where we have used relations in \eqref{eq: Ke}. It follows that for each $i$,
	\[
	{\rm Tr}(\zeta(e_ak_a^{-1}K_{2\rho})X_i)=q_a^2 {\rm Tr}(\zeta(k_a^{-1}e_aK_{2\rho})X_i)= {\rm Tr}(\zeta(k_a^{-1}K_{2\rho}e_a)X_i).
	\]
	Thus, we obtain $[C_k,e_a]=0$ for $a\neq m$. Similarly, one can prove that $[C_k,f_a]= [C_k, K_b]=0$ for $1\leq a\leq m+n-1$ and $1\leq b\leq m+n$. Therefore, the elements $C_k$ are central in ${\rm U}_q(\mathfrak{gl}_{m,n})$.
\end{proof}

\subsection{Explicit formulae for central elements}
We will construct an explicit element $\Gamma_M$ which satisfies the condition in \thmref{thm-gen-center}. In this way, we obtain corresponding central elements $C_k$ for all $k\geq 1$.  In the following, we  are concerned with $M=V=\mathcal{K}^{m+n}$, the natural representation of ${\rm U}_{q}(\gl_{m,n})$ as given in \eqref{eq: nat}.

We define the following two elements of ${\rm U}_{q}(\gl_{m,n})\otimes \End_{\mathcal{K}} (V)$:
\begin{eqnarray}
&&L^{+}=\sum\limits_{a\in {\bf I}_{m,n}}K_a\otimes e_{aa}+(q-q^{-1})\sum\limits_{a<b}K_bE_{ab}\otimes e_{ba},\label{def-L+}\\
&&L^{-}=\sum\limits_{a\in {\bf I}_{m,n}}K_a^{-1}\otimes e_{aa}-(q-q^{-1})\sum\limits_{a<b}E_{ba} K_b^{-1}\otimes e_{ab}.\label{def-Lmin}
\end{eqnarray}

The definition of $L^{\pm}$  is inspired by the  connection between ${\rm U}_q(\mathfrak{gl}_{m,n})$ and ${\rm U}_q(\mathfrak{gl}_{m|n})$ discussed in the introduction, as well as the explicit $L$-operators for ${\rm U}_q(\mathfrak{gl}_{m|n})$ given in \cite{Z2}. However, it is not immediately apparent from this connection that the operators $L^{\pm}$ satisfy the properties stated in \lemref{lem: SL} and \lemref{lem-L-D} below, as ${\rm U}_q(\mathfrak{gl}_{m,n})$ and ${\rm U}_q(\mathfrak{gl}_{m|n})$  have fundamentally different algebraic structures.  Therefore, a detailed analysis is necessary and, in fact, turns out to be nontrivial.

The element $L^{-}$ has  the inverse given as follows.
\begin{lem}\label{lem: SL}
	Recall that $S$ is the antipode of ${\rm U}_{q}(\gl_{m,n})$ given by \eqref{eq: antipode}. Then
	\begin{equation}\label{def-L--}
	(L^{-})^{-1}=(S\otimes 1)(L^{-})=
	\sum\limits_{a\in {\bf I}_{m,n}}K_a\otimes e_{aa}+(q-q^{-1})\sum\limits_{a<b}K_a\bar{E}_{ba}\otimes e_{ab}.
	\end{equation}
	
\end{lem}

\begin{proof}	
	Using Lemma \ref{S-act on E}, we obtain
	\[
	\begin{aligned}
	(S\otimes 1)(L^{-})
	={}&\sum\limits_{a\in {\bf I}_{m,n}}K_a\otimes e_{aa}-(q-q^{-1})\sum\limits_{a<b}S(E_{ba}K_b^{-1})\otimes e_{ab}\\
	={}&\sum\limits_{a\in {\bf I}_{m,n}}K_a\otimes e_{aa}+(q-q^{-1})\sum\limits_{a<b}K_a \bar{E}_{ba}\otimes e_{ab}.
	\end{aligned}
	\]
	This proves the second equation. Next we check directly that $(S\otimes 1)(L^{-})$ is the inverse of $L^{-}$. Note that
	\[
	\begin{aligned}
	L^{-}((S\otimes 1)(L^{-}))=
	1\otimes I +(q-q^{-1})\sum\limits_{a<b}(\bar{E}_{ba}-E_{ba}-(q-q^{-1})\sum\limits_{c=a+1}^{b-1}E_{ca}\bar{E}_{bc})\otimes e_{ab}.
	\end{aligned}
	\]
	We claim that
	\begin{equation*}
	\bar{E}_{ba}=E_{ba}+(q-q^{-1})\sum\limits_{c=a+1}^{b-1}E_{ca}\bar{E}_{bc},\quad a<b,
	\end{equation*}
	and hence $L^{-}((S\otimes 1)(L^{-}))=1\otimes I$.  To prove this claim, we use induction on $b-a$.  If $b=a+1$, then $\bar{E}_{ba}=E_{ba}=f_a$ holds true by definition.  Assume that the claim holds for $\bar{E}_{b-1,a}$ for $b-a>1$.  Then we obtain
	\[
	\begin{aligned}
	&\bar{E}_{ba}=\bar{E}_{b,b-1}\bar{E}_{b-1,a}-q_{b-1}^{-1}\bar{E}_{b-1,a}\bar{E}_{b,b-1}\\
	=& E_{b,b-1}(E_{b-1,a}+(q-q^{-1})\sum\limits_{c=a+1}^{b-2}E_{ca}\bar{E}_{b-1,c})-q_{b-1}^{-1}(E_{b-1,a}+(q-q^{-1})\sum\limits_{c=a+1}^{b-2}E_{ca}\bar{E}_{b-1,c})E_{b,b-1}\\
	=& E_{b,b-1}E_{b-1,a}-q_{b-1}^{-1}E_{b,b-1}E_{b-1,a} +(q-q^{-1})\sum\limits_{c=a+1}^{b-2}E_{ca}(E_{b,b-1}\bar{E}_{b-1,c}-q_{b-1}^{-1}\bar{E}_{b-1,c}E_{b,b-1})\\
	=& E_{ba}+(q_{b-1}-q_{b-1}^{-1})E_{b-1,a}E_{b,b-1}+(q-q^{-1})\sum\limits_{c=a+1}^{b-2}E_{ca}\bar{E}_{bc}   \\
	=&E_{ba}+(q-q^{-1})\sum\limits_{c=a+1}^{b-1}E_{ca}\bar{E}_{bc}.
	\end{aligned}
	\]
	Therefore, the claim holds. Similarly, one can prove that $((S\otimes 1)(L^{-}))L^{-}=1\otimes I$, and hence $(S\otimes 1)(L^{-})$ is the inverse of $L^{-}$.
\end{proof}

Recall that $\pi$ is the natural representation of $ {\rm U}_{q}(\gl_{m,n})$.
The elements $L^{\pm}$ have the following important property, which is  analogous to their counterparts in  the case of ${\rm U}_{q}(\mathfrak{gl}_{m|n})$ \cite{Z2}.

\begin{lem}\label{lem-L-D}
	For any $u\in {\rm U}_{q}(\gl_{m,n})$, we have
	\begin{equation}\label{prop-L+-}
	L^{\pm}(1 \otimes \pi)(\D(u))=(1 \otimes \pi)(\D'(u))L^{\pm}.
	\end{equation}
\end{lem}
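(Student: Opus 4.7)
The plan is to reduce the identity to a check on the algebra generators and then verify it directly. First, I observe that the collection of $u\in {\rm U}_q(\gl_{m,n})$ for which $L^{\pm}(1\otimes \pi)\Delta(u) = (1\otimes \pi)\Delta'(u)L^{\pm}$ holds is closed under multiplication: both $(1\otimes \pi)\circ\Delta$ and $(1\otimes \pi)\circ\Delta'$ are algebra homomorphisms into ${\rm U}_q(\gl_{m,n})\otimes \End_{\mathcal K}(V)$, so if the relation is satisfied by $u_1$ and $u_2$, then
\[
L^\pm (1\otimes \pi)\Delta(u_1u_2) = (1\otimes \pi)\Delta'(u_1) L^\pm (1\otimes\pi)\Delta(u_2) = (1\otimes\pi)\Delta'(u_1u_2) L^\pm.
\]
Hence it suffices to check the three families of generators $K_b^{\pm 1}$, $e_a$, $f_a$.

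For $u = K_b^{\pm 1}$: since $\Delta(K_b) = \Delta'(K_b) = K_b\otimes K_b$, the claim reduces to showing that $L^+$ and $L^-$ each commute with $K_b\otimes \pi(K_b)$. The operator $\pi(K_b)$ is diagonal with $\pi(K_b)v_c = q_b^{\delta_{bc}}v_c$, so in each off-diagonal term $K_cE_{ab'}\otimes e_{b'a}$ (and its $\bar E$-counterpart in $L^-$ and $(L^-)^{-1}$), the $q$-scalar produced by conjugating the first tensor factor by $K_b$, computed via \lemref{lem: KErel}, cancels exactly against the scalar produced by conjugating the matrix unit $e_{b'a}$ by $\pi(K_b)$.

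For $u = e_a$ (and analogously $u=f_a$): I would expand
\[
L^+\bigl(e_a\otimes\pi(k_a) + 1\otimes \pi(e_a)\bigr) \quad \text{and}\quad \bigl(k_a\otimes\pi(e_a) + e_a\otimes 1\bigr)L^+,
\]
decompose both sides in the matrix-unit basis $\{e_{ij}\}$ of $\End_{\mathcal K}(V)$, and compare the coefficient in ${\rm U}_q(\gl_{m,n})$ of each $e_{ij}$. The crucial inputs are: the commutators in \lemref{eE}, most notably $[e_a, E_{ba}] = -k_a E_{b,a+1}$, $[e_b, E_{b+1,a}] = E_{ba}k_b^{-1}$, together with the vanishing relations \eqref{eq:13}; and the $q$-bracket identity $[E_{ab}, E_{bc}]_{q_b^{-1}} = E_{ac}$ from \lemref{lem: Erel}, which is what fuses two adjacent entries of $L^+$ into a single $K_cE_{b,a+1}$ term on the right. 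Throughout, \lemref{lem: KErel} is used to move Cartan factors past root vectors, and one keeps track of the identity $k_a K_{a+1} = K_a$ needed to equate the diagonal contributions. The $f_a$ computation is the mirror image using \lemref{eE} again, and the $L^-$ identity can either be verified directly in parallel using the $\bar E$-recursion, or deduced from the $L^+$ identity by applying $S\otimes 1$ to both sides and invoking $(L^-)^{-1}=(S\otimes 1)(L^-)$ together with the anti-compatibility of $S$ with $\Delta$ and $\Delta'$.

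The main obstacle is bookkeeping at the degenerate node $a=m$: because the usual $q_m^2$-type identities are replaced by $e_m^2 = f_m^2 = 0$ and the anticommutation $k_me_m = -e_mk_m$, several cancellations that are automatic for $a\ne m$ must instead be produced by the Serre-type relations \eqref{eq: eE} and \eqref{eq: fE}. In particular, matching the coefficients of $e_{m-1,m+2}$ and $e_{m+2,m-1}$ requires unwrapping the definitions of $E_{m-1,m+2}$ and $E_{m+2,m-1}$ and using those relations directly, rather than applying a generic $q$-Serre argument.
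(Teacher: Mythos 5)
Your proposal is correct and follows essentially the same route as the paper: reduce to the generators $K_b^{\pm 1}$, $e_a$, $f_a$ via multiplicativity of $(1\otimes\pi)\Delta$ and $(1\otimes\pi)\Delta'$, then verify the $e_a$, $f_a$ cases by comparing matrix-unit coefficients using \lemref{lem: KErel}, \lemref{eE} and \lemref{lem: Erel} (the paper merely streamlines this by first factoring $L^{\pm}$ into a diagonal Cartan part times $\widetilde{L}^{\pm}$, and disposes of $L^-$ with ``the other two follow similarly'' rather than your antipode trick). One minor recalibration: in the actual computation the degenerate node is absorbed by the single cancellation $(q_m-q_{m+1})e_m^2=0$, and the Serre-type relations \eqref{eq: eE}--\eqref{eq: fE} enter only indirectly through the already-packaged commutation relations \eqref{eq:13}, so no separate unwrapping of $E_{m-1,m+2}$ and $E_{m+2,m-1}$ is required.
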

\begin{proof}
	We introduce the following elements:
	\begin{align*}
	\widetilde{L}^{+}:&=1\otimes I+(q-q^{-1})\sum\limits_{a<b}E_{ab}\otimes e_{ba},\\
	\widetilde{L}^{-}:&=1\otimes I-(q-q^{-1})\sum\limits_{a<b}E_{ba} \otimes e_{ab},
	\end{align*}
	which are related to $L^{\pm}$ by
	\begin{eqnarray*}
		L^{+}=(\sum\limits_{a\in {\bf I}_{m,n}}K_a\otimes e_{aa})\widetilde{L}^{+},\quad L^{-}=\widetilde{L}^{-}(\sum\limits_{a\in {\bf I}_{m,n}}K_a^{-1}\otimes e_{aa}).\label{def-L+-}
	\end{eqnarray*}
	To prove \eqref{prop-L+-}, it suffices to  show the following equations:
	\begin{align}
	\widetilde{L}^+( E_{c,c+1}\otimes \pi(k_c)+  1\otimes e_{c,c+1}) &=(E_{c,c+1}\otimes \pi(k_c^{-1})+1\otimes e_{c,c+1}) \widetilde{L}^+, \label{eq: Lplus1} \\
	\widetilde{L}^+( E_{c+1,c}\otimes I + k_{c}^{-1}\otimes e_{c+1,c} ) &= (E_{c+1,c}\otimes I+ k_c \otimes e_{c+1,c}) \widetilde{L}^+, \label{eq: Lplus2} \\
	\widetilde{L}^-(E_{c,c+1} \otimes I+ k_c^{-1} \otimes e_{c,c+1} ) &=(E_{c,c+1}\otimes I +k_c \otimes e_{c,c+1} )\widetilde{L}^{-}, \label{eq: Lmin1} \\
	\widetilde{L}^- (E_{c+1,c}\otimes \pi(k_c) +1\otimes e_{c+1,c})&= (E_{c+1,c}\otimes \pi(k_c^{-1})+ 1\otimes e_{c+1,c}) \widetilde{L}^{-}. \label{eq: Lmin2}
	\end{align}

	We will only prove \eqnref{eq: Lplus1} and \eqnref{eq: Lplus2}, and the other two can be proved similarly. Consider \eqnref{eq: Lplus1}, and define
	\[
	\begin{aligned}
	A_1&= \widetilde{L}^+(E_{c,c+1}\otimes \pi(k_c))- (E_{c,c+1}\otimes \pi(k_c^{-1})) \widetilde{L}^+,\\
	A_2&=[\widetilde{L}^+,  (1\otimes e_{c,c+1})].
	\end{aligned}
	\]
	Then \eqnref{eq: Lplus1} is equivalent to $A_1+A_2=0$. Note that
	\[
	\begin{aligned}
	A_1 = E_{c,c+1}\otimes (\pi(k_c)- \pi(k_c^{-1})) +(q-q^{-1}) \sum_{a<b}A_{ab},
	\end{aligned}
	\]
	where
	\[A_{ab}= E_{ab}E_{c,c+1}\otimes e_{ba}\pi(k_c)-  E_{c,c+1}E_{ab} \otimes \pi(k_c^{-1})e_{ba}. \]
	By \eqnref{eq: nat}, we have
	\[
	\begin{aligned}
	e_{ba}\pi(k_c) &= e_{ba}+(q_c-1)\delta_{ac}e_{bc} + (q_{c+1}^{-1}-1) \delta_{a,c+1} e_{b,c+1},\\
	\pi(k_c^{-1})e_{ba}&= e_{ba}+ (q_c^{-1}-1)\delta_{bc}e_{ca}+(q_{c+1}-1)\delta_{b,c+1}e_{c+1,a}.
	\end{aligned}
	\]
	It follows that
	$$\sum\limits_{a<b}A_{ab}=\sum\limits_{a<c,b=c,c+1}A_{ab}+\sum\limits_{b>c+1,a=c,c+1}A_{ab}+\sum\limits_{a=c,b=c+1}A_{ab}.$$
	Applying \eqnref{def-E} and \lemref{lem: Erel}, we obtain
	\[
	\begin{aligned}
	\sum_{a<c,b=c,c+1}A_{ab}= & \sum_{a<c}(E_{ac}E_{c,c+1}\otimes e_{ca}- E_{c,c+1}E_{ac}\otimes q_{c}^{-1} e_{ca} ) \\
	&+ \sum_{a<c} (E_{a,c+1} E_{c,c+1} \otimes e_{c+1,a} - E_{c,c+1} E_{a, c+1} \otimes q_{c+1} e_{c+1,a})\\
	=& \sum_{a<c} E_{a,c+1} \otimes e_{ca}.
	\end{aligned}
	\]
	Similarly, one can deduce that
	\[
	\begin{aligned}
	\sum\limits_{b>c+1,a=c,c+1}A_{ab}&= -\sum_{b>c+1} E_{cb} \otimes e_{b,c+1},\\
	\sum_{a=c, b=c+1}A_{ab}& =(q_c-q_{c+1})E_{c,c+1}^2 \otimes  e_{c+1,c}=0.
	\end{aligned}
	\]
	Therefore, we have
	\[
	\begin{aligned}
	A_1= & E_{c,c+1}\otimes (\pi(k_c)- \pi(k_c^{-1})) +(q-q^{-1}) \sum_{a<b}A_{ab} \\
	=& (q-q^{-1}) E_{c,c+1} \otimes (e_{cc}-e_{c+1,c+1}) +(q-q^{-1}) \big(\sum_{a<c} E_{a,c+1} \otimes e_{ca} -\sum_{b>c+1} E_{cb} \otimes e_{b,c+1}\big).
	\end{aligned}
	\]
	On the other hand, we have
	\[
	A_2= [\widetilde{L}^+,  (1\otimes e_{c,c+1})]= (q-q^{-1}) (\sum_{b>c} E_{cb}\otimes e_{b,c+1} -\sum_{a<c}E_{a,c+1}\otimes e_{ca} ).
	\]
	It is clear that $A_1+A_2=0$, proving \eqnref{eq: Lplus1}.
	
	We proceed to prove \eqnref{eq: Lplus2}. Define
	\[
	B_1=[\widetilde{L}^+, E_{c+1,c}\otimes I]= -(q-q^{-1})\sum_{a<b} [E_{c+1,c}, E_{ab} ]\otimes e_{ba}.
	\]
	Applying \lemref{eE}, we obtain
	\[
	\begin{aligned}
	B_1&= -(q-q^{-1}) \big([E_{c+1,c},E_{c,c+1}]\otimes e_{c+1,c} +\sum_{a<c}[E_{c+1,c}, E_{a,c+1}]\otimes e_{c+1,a} + \sum_{b>c+1}[E_{c+1,c},,E_{cb}]\otimes e_{bc}  \big)\\
	&= -(q-q^{-1}) \big( -\frac{k_c-k_c^{-1}}{q_c-q_c^{-1}}\otimes e_{c+1,c} +\sum_{a<c} -k_c E_{ac}\otimes e_{c+1, a} +\sum_{b>c+1} E_{c+1,b}k_c^{-1} \otimes e_{bc} \big)\\
	&= (k_c-k_c^{-1})\otimes e_{c+1,c} + (q-q^{-1}) \big( \sum_{a<c} k_c E_{ac}\otimes e_{c+1, a} -\sum_{b>c+1} E_{c+1,b}k_c^{-1} \otimes e_{bc}   \big).
	\end{aligned}
	\]
	On the other hand, we have
	\[
	\begin{aligned}
	B_2&= \widetilde{L}^+ (k_c^{-1}\otimes e_{c+1,c}) -(k_c\otimes e_{c+1,c}) \widetilde{L}^+\\
	&= (k_c^{-1}-k_c)\otimes e_{c+1,c} + (q-q^{-1}) \big( -\sum_{a<c} k_c E_{ac}\otimes e_{c+1, a} +\sum_{b>c+1} E_{c+1,b}k_c^{-1} \otimes e_{bc} \big).
	\end{aligned}
	\]
	Clearly, $B_1+B_2=0$, which is equivalent to \eqnref{eq: Lplus2}. This finishes the proof of the lemma.
\end{proof}

Now we define the following element $\Gamma_V\in {\rm U}_{q}(\gl_{m,n})\otimes \End_{\mathcal{K}} (V)$ associated to the natural representation $V$:
\begin{equation}\label{eq: gammaV}
\Gamma_V:=(L^{-})^{-1}L^{+}.
\end{equation}

{}
\begin{lem}\label{lem: GammaV}
	For any $u\in {\rm U}_{q}(\gl_{m,n})$, we have
	$$[\Gamma_V,( 1 \otimes \pi)(\D(u))]=0.$$
\end{lem}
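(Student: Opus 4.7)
The plan is to derive the commutation property of $\Gamma_V$ directly from \lemref{lem-L-D}, which already supplies the key intertwining relations for both $L^+$ and $L^-$. The essential observation is that if $L^+$ and $L^-$ both intertwine $\Delta$ with $\Delta'$ under $(1\otimes \pi)$, then their ratio $(L^-)^{-1}L^+$ intertwines $\Delta$ with itself, which is exactly the desired statement.

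Concretely, I would first apply \lemref{lem-L-D} to $L^-$ to obtain
\[
L^- (1\otimes \pi)(\Delta(u)) = (1\otimes \pi)(\Delta'(u)) L^-.
\]
Multiplying on the left by $(L^-)^{-1}$ and on the right by $(L^-)^{-1}$ gives the equivalent intertwining for the inverse:
\[
(L^-)^{-1}(1\otimes \pi)(\Delta'(u)) = (1\otimes \pi)(\Delta(u)) (L^-)^{-1}.
\]
Next, I would apply \lemref{lem-L-D} to $L^+$ to obtain $L^+(1\otimes \pi)(\Delta(u)) = (1\otimes \pi)(\Delta'(u))L^+$. Chaining these together,
\[
\Gamma_V (1\otimes \pi)(\Delta(u)) = (L^-)^{-1}L^+(1\otimes \pi)(\Delta(u)) = (L^-)^{-1}(1\otimes \pi)(\Delta'(u))L^+ = (1\otimes \pi)(\Delta(u))(L^-)^{-1}L^+ = (1\otimes \pi)(\Delta(u))\Gamma_V,
\]
which is the claim.

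There is no real obstacle: all the difficult algebra — the explicit verification of the intertwining relations \eqref{eq: Lplus1}--\eqref{eq: Lmin2} — has been absorbed into \lemref{lem-L-D}, and the invertibility of $L^-$ together with its explicit inverse has been established in the preceding lemma. The only content of this proof is the formal manipulation with inverses, so the argument reduces to the two-line calculation above.
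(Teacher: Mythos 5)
Your argument is correct and is essentially identical to the paper's own proof: both apply \lemref{lem-L-D} to $L^{+}$ and then use the inverted intertwining relation for $(L^{-})^{-1}$ to convert $\Delta'$ back to $\Delta$. No issues.
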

\begin{proof}
	Using \lemref{lem-L-D}, for any $u\in \mathrm{U}_q(\mathfrak{gl}_{m,n})$,  we have
	\[
	(L^{-})^{-1}L^{+}( 1 \otimes \pi)(\D(u))=  (L^{-})^{-1}( 1 \otimes \pi) (\D'(u)) L^+= ( 1 \otimes \pi)(\D(u)) (L^-)^{-1}L^+.
	\]
	This completes the proof.
\end{proof}

Combining \lemref{lem: GammaV} and \thmref{thm-gen-center}, we obtain explicit central elements $C_k$ of ${\rm U}_q(\mathfrak{gl}_{m,n})$ associated to $V$ for any $k\geq 1$.

\begin{example}\label{exam: simp}
	Let ${\rm U}_{q}(\mathfrak{gl}_{1,1})$ be the degenerate quantum group generated by $K_{1}^{\pm 1}, K_2^{\pm 1}, E=E_{12}$ and $F=E_{21}$, subject to the following relations:
	\begin{align*}
	& K_1^{\pm 1} K_2^{\pm 1}=K_2^{\pm 1}K_1^{\pm 1}, \quad  K_aK_a^{-1}=K_a^{-1}K_a=1, \  a=1,2,   \\
	&K_1 EK_1^{-1} =qE, \quad K_2EK_2^{-1}=-q E,\\
	&K_1 FK_1^{-1}=q^{-1}F, \quad K_2FK_2^{-1}=-q^{-1}F, \\
	&EF-FE= \frac{K_1K_2^{-1}- K_1^{-1}K_2}{q-q^{-1}},\\
	& E^2=F^2=0.
	\end{align*}
	Then using the element $\Gamma_V$ defined by \eqnref{eq: gammaV} and \thmref{thm-gen-center}, we obtain a central element
	\[
	C_1= q^{-1}K_1^2- q^{-1}K_2^2 -(q-q^{-1})^2 K_1K_2FE.
	\]
	This is analogous to the quantum Casimir element of  ${\rm U}_q(\mathfrak{gl}_{2})$.
\end{example}

To obtain a more concise formula for the central elements, we introduce a variant of $\Gamma_V$  defined by
\[
\Gamma:=\frac{\Gamma_V-1\otimes I}{q-q^{-1}}.
\]
This still satisfies the commutation relation in \lemref{lem: GammaV} and will be used to construct the central elements as described below.

We introduce the following elements:
\begin{equation}\label{eq: X}
\begin{aligned}
&X_{ab}=K_bE_{ab},\quad X_{ba}=K_a\bar{E}_{ba},\quad a<b,\\
&X_{aa}=\frac{K_a-1}{q-q^{-1}}, \quad a\in {\bf I}_{m,n}.
\end{aligned}
\end{equation}
Then we can rewrite $L^+$ and $(L^{-})^{-1}$ defined by \eqnref{def-L+} and \eqnref{def-L--} as
\begin{align*}
&L^{+}=1\otimes I+(q-q^{-1})\sum\limits_{a\le b}X_{ab}\otimes e_{ba},\\
&(L^{-})^{-1}=1\otimes I+(q-q^{-1})\sum\limits_{a\ge b} X_{ab} \otimes e_{ba}.
\end{align*}
It follows that
\[
\begin{aligned}
&\Gamma=\frac{(L^{-})^{-1}L^{+}-1\otimes I}{q-q^{-1}}
&=\sum\limits_{a, b\in {\bf I}_{m,n}}X_{ab}\otimes e_{ba}+(q-q^{-1})\sum\limits_{a\ge b, a\geq c}X_{ab} X_{ca}\otimes e_{bc}.
\end{aligned}
\]

The following is a consequence of \lemref{lem: GammaV} and \thmref{thm-gen-center}.
\begin{thm}\label{thm-center}
	For $k\in\Z_+$, the elements
	$$C_k={\rm Tr}_2\bigg((1\otimes \pi(K_{2\rho}))\big (\sum\limits_{a, b\in {\bf I}_{m,n}}X_{ab}\otimes e_{ba}+(q-q^{-1})\sum\limits_{a\ge b, a\geq c}X_{ab} X_{ca}\otimes e_{bc}\big )^k\bigg)$$
	lie in the centre of ${\rm U}_{q}(\gl_{m,n})$, where $\pi$ is the natural representation defined by \eqnref{eq: nat}, and  $K_{2\rho}$ and $X_{ab}$ are  defined by \eqref{eq: K2rho} and \eqnref{eq: X}, respectively.
\end{thm}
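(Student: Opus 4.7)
The plan is to deduce \thmref{thm-center} as a direct specialisation of \thmref{thm-gen-center}. Specifically, I would take $M = V$, $\zeta = \pi$ the natural representation, and use the operator $\Gamma = (\Gamma_V - 1\otimes 1)/(q-q^{-1})$ in place of $\Gamma_M$.

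Two points need to be verified. First, that $\Gamma$ satisfies the hypothesis $[\Gamma,(1\otimes\pi)\Delta(u)] = 0$ of \thmref{thm-gen-center} for every $u\in {\rm U}_q(\gl_{m,n})$. This is immediate from \lemref{lem: GammaV}: $\Gamma_V$ already satisfies this relation, $1\otimes 1$ commutes with everything, and rescaling by the nonzero scalar $q-q^{-1}$ preserves commutation. Second, that $\Gamma$ agrees with the explicit expression appearing inside the partial trace in the theorem. This is precisely the calculation carried out in the paragraph immediately preceding the statement: rewrite $L^{+}$ and $(L^{-})^{-1}$ in the form $1\otimes 1 + (q-q^{-1})\sum X_{ab}\otimes e_{ba}$ (using $X_{aa} = (K_a-1)/(q-q^{-1})$ to absorb the diagonal $K_a$ factors from \eqnref{def-L+} and \eqnref{def-L--}), multiply out using $e_{ba}e_{dc} = \delta_{ad}e_{bc}$ to produce the quadratic term $(q-q^{-1})^{2}\sum_{a\geq b,\,a\geq c}X_{ab}X_{ca}\otimes e_{bc}$, then subtract $1\otimes 1$ and divide by $q-q^{-1}$.

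With these two observations in hand, \thmref{thm-gen-center} applied to $\Gamma$ yields immediately that $C_k = {\rm Tr}_2((1\otimes \pi(K_{2\rho}))\Gamma^k)$ lies in the centre of ${\rm U}_q(\gl_{m,n})$ for every $k\in\Z_+$; the case $k=0$ gives a scalar multiple of $1$, which is trivially central. There is no real obstacle here, since the theorem is essentially an assembly of \thmref{thm-gen-center}, \lemref{lem: GammaV}, and the explicit formula for $\Gamma_V$. The only point requiring genuine attention is the bookkeeping in the matrix multiplication of the second step: one has to track carefully how the diagonal terms $X_{aa}\otimes e_{aa}$ arising from each of the two factors $L^{+}$ and $(L^{-})^{-1}$ combine with one another, and then compare the resulting expression against the statement of the theorem.
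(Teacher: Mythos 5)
Your proposal is correct and follows essentially the same route as the paper: the paper likewise obtains this theorem by applying \thmref{thm-gen-center} with $M=V$, $\zeta=\pi$ and the operator $\Gamma=(\Gamma_V-1\otimes 1)/(q-q^{-1})$, whose commutation property comes from \lemref{lem: GammaV} and whose explicit form in terms of the $X_{ab}$ is exactly the expansion of $(L^-)^{-1}L^+$ you describe. The diagonal bookkeeping you flag is indeed the only delicate point, and your treatment of the trivial case $k=0$ is a sensible addition.
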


Using \thmref{thm-center}, it is straightforward to check that the central element $C_1$ associated to $\Gamma$ is given by the following formula: if $m+n$ is even,
\begin{equation}\label{eq: Cas1}
\begin{aligned}
C_1=&\sum_{a=1}^m q_a^{m-n+1-2a}\frac{K_a-1}{q-q^{-1}} + \sum_{b=m+1}^{m+n} q_{b}^{3m+n+1-2b}\frac{K_b-1}{q-q^{-1}}  \\
&+(q-q^{-1})\big(\sum_{a\geq b, b\leq m}q_{b}^{m-n+1-2b} K_{b} \bar{E}_{ab} K_a E_{ba}+\sum_{a\geq b, b>m}q_{b}^{3m+n+1-2b} K_{b} \bar{E}_{ab} K_a E_{ba}  \big),
\end{aligned}
\end{equation}
and if $m+n$ is odd, then
\begin{equation}\label{eq: Cas2}
\begin{aligned}
C_1=&\sum_{a=1}^m q_a^{m-n+2-2a}\frac{K_a-1}{q-q^{-1}} + \sum_{b=m+1}^{m+n} q_{b}^{3m+n-2b}\frac{K_b-1}{q-q^{-1}}  \\
&+(q-q^{-1})\big(\sum_{a\geq b, b\leq m}q_{b}^{m-n+2-2b} K_{b} \bar{E}_{ab} K_a E_{ba}+\sum_{a\geq b, b>m}q_{b}^{3m+n-2b} K_{b} \bar{E}_{ab} K_a E_{ba}  \big).
\end{aligned}
\end{equation}
We call element $C_1$ the \emph{quantum Casimir element} of ${\rm U}_q(\mathfrak{gl}_{m,n})$.

We conjecture that the elements $C_1, C_2, \dots, C_{m+n-1}$ 
generate the centre of $\mathrm{U}_q(\mathfrak{sl}_{m,n})$.  An analogous conjecture was made in \cite{Z2} for the quantum supergroup $\mathrm{U}_q(\mathfrak{sl}_{m|n})$, and remains open except when $n=0$, in which case \cite{L} shows the centre of $\mathrm{U}_q(\mathfrak{sl}_m)$ is generated by these analogous elements.

\section{A universal \texorpdfstring{$L$}{L}-operator}\label{sec: Lop}

The aim of this section is  to construct a spectral parameter-dependent solution $L(x)$ which satisfies the quantum Yang-Baxter equation in ${\rm U}_{q}(\mathfrak{gl}_{m,n})\otimes {\rm End}_{\mathcal{K}}(V)\otimes {\rm End}_{\mathcal{K}}(V)$.  Moreover, this approach yields an RLL realisation of the degenerate quantum general linear group, which we describe in \secref{sec: RLL}.

\subsection{The main result} Recall that the $R$-matrix associated to the natural representation $V$ of ${\rm U}_q(\mathfrak{gl}_{m,n})$ is an invertible element $R\in {\rm End}_{\mathcal{K}}(V\otimes V)$ satisfying the quantum Yang-Baxter equation in ${\rm End}_{\mathcal{K}}(V\otimes V\otimes V)$:
\[
R_{12}R_{13}R_{23}=R_{23}R_{13}R_{12}.
\]
This element $R$ has been constructed explicitly in \cite{CWZ}.

Our construction of $L(x)$ will make use of the $R$-matrix and the $L$-operators. Recall from \cite{CWZ} that the $R$-matrix is given by 
\begin{equation}\label{eq: Rmat}
	R=I\otimes I+\sum\limits_{a\in {\bf I}_{m,n}}(q_a-1)e_{aa}\otimes e_{aa}+(q-q^{-1})\sum\limits_{a<b}e_{ab}\otimes e_{ba}\in {\rm End}_{\mathcal{K}}(V\otimes V).
\end{equation}
Let $v_{a}, a\in {\bf I}_{m,n}$ be the standard basis of $V$. Then it is clear that
\begin{equation}
R(v_a\otimes v_b)= \begin{cases}
v_a\otimes v_b, &\quad a<b,\\
q_a v_a\otimes v_a, &\quad a=b, \\
v_a\otimes v_b+(q-q^{-1}) v_b\otimes v_a, &\quad a>b.
\end{cases}
\end{equation}
Let $T$ be the linear permutation operator on $V\otimes V$  such that $T(u\otimes v)=v\otimes u$ for $u,v\in V$, and let $R^{-T}:=T(R^{-1})T$. Then we have
\[
R^{-T}=I\otimes I+\sum\limits_{a\in {\bf I}_{m,n}}(q_a^{-1}-1)e_{aa}\otimes e_{aa}-(q-q^{-1})\sum\limits_{a<b}e_{ba}\otimes e_{ab}.
\]
For any   $x\in \mathbb{C}^{\ast}:= \mathbb{C}\backslash \{0\}$, we define the $R$-matrix $R(x)\in {\rm End}_{\mathcal{K}}(V\otimes V)$  by
\[
R(x):= xR- x^{-1}R^{-T}.
\]
Similarly, recalling the elements $L^{\pm}$ given in \eqref{def-L+} and \eqref{def-Lmin},  we introduce
\begin{equation}\label{eq: Lx}
L(x):= xL^{+}-x^{-1}L^{-}, \quad x\in \mathbb{C}^*.
\end{equation}
 The element $L(x)$ belongs to ${\rm U}_{q}(\mathfrak{gl}_{m,n})\otimes {\rm End}_{\mathcal{K}}(V)$ and is referred to as a \emph{universal $L$-operator} \cite{Z2}. Note that $(\pi\otimes 1) L(x) = R(x)$.

The following is the main result of this section.
\begin{thm}\label{thm-1}
	Let $L(x)$ be as  defined in \eqref{eq: Lx}. Then $L(x)$ satisfies the quantum
	Yang-Baxter equation in ${\rm U}_{q}(\gl_{m,n})\otimes \End_{\mathcal{K}} (V)\otimes \End_{\mathcal{K}} (V)$:
	\begin{equation*}\label{Y-B eq}
	L_{12}(x)L_{13}(xy)R_{23}(y)=R_{23}(y)L_{13}(xy)L_{12}(x),\quad x,y\in\C^*.
	\end{equation*}
\end{thm}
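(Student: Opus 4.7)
I will reduce the Yang--Baxter equation to a finite list of RLL-type identities by Laurent expansion in the spectral parameters, then verify each identity.

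First, substitute $L(x)=xL^+-x^{-1}L^-$ and $R(y)=yR-y^{-1}R^{-T}$ into both sides of the equation. Each side becomes an eight-term sum, and the Laurent monomials in $x,y$ that appear lie in $\{x^{\pm 2}y^{\pm 2},\ x^{\pm 2},\ y^{\pm 2},\ 1\}$. The six monomials of nonzero total degree contribute a single product on each side, while the constant monomial $1$ contributes two products on each side. Equating coefficients reduces the Yang--Baxter equation to six pure identities of the form
\begin{equation*}
L^{\epsilon_1}_{12}L^{\epsilon_2}_{13}S_{23} \;=\; S_{23}L^{\epsilon_2}_{13}L^{\epsilon_1}_{12},\qquad \epsilon_i\in\{+,-\},\ S\in\{R,\,R^{-T}\},
\end{equation*}
together with one combined identity at $x^0 y^0$ that is implied by two further mixed RLL relations of the same type.

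To establish each RLL identity I will use \lemref{lem-L-D}, which asserts $L^{\pm}(1\otimes \pi)\Delta(u)=(1\otimes \pi)\Delta'(u)L^{\pm}$ for all $u\in{\rm U}_q(\gl_{m,n})$. A direct check from \eqref{def-L+}, \eqref{def-Lmin} and \eqref{eq: Rmat} shows that $(\pi\otimes 1)L^+=R$ and $(\pi\otimes 1)L^-=R^{-T}$, already noted in the paragraph preceding the theorem. Inserting an auxiliary copy of $\End_{\mathcal{K}}(V)$ in position three and specialising \lemref{lem-L-D} in position two via $\pi$, the element $R_{23}$ (respectively $R^{-T}_{23}$) acquires the role of the intertwiner between the two orderings $L^{\epsilon}_{13}L^{\epsilon}_{12}$ and $L^{\epsilon}_{12}L^{\epsilon}_{13}$, yielding the pure RLL's. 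For the mixed RLL's one additionally invokes the inverse formula \eqref{def-L--} and the identity $\bar{E}_{ba}=E_{ba}+(q-q^{-1})\sum_{c=a+1}^{b-1}E_{ca}\bar{E}_{bc}$ derived in Section 3 to bridge between $L^+$ and $(L^-)^{-1}$.

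The main obstacle I anticipate is the verification of the mixed relations such as $L^-_{12}L^+_{13}R^{-T}_{23}=R^{-T}_{23}L^+_{13}L^-_{12}$, in which the two $L$-factors are of opposite type and the intertwining lemma does not apply in a single step. For these a direct entry-by-entry comparison in $\End_{\mathcal{K}}(V)\otimes \End_{\mathcal{K}}(V)$ appears unavoidable, using the commutation relations of \lemref{eE} and \lemref{lem: Erel} together with the nilpotency $e_m^2=f_m^2=0$ and the sign twist at the degenerate node $a=m$ governed by the Zachos-type relations \eqref{eq: Zac}, much as was needed in the proof of \thmref{thm-gen-center}.
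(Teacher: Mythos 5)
Your reduction step is exactly the paper's: expanding $L(x)L(xy)R(y)$ in Laurent monomials of $x,y$ reduces the theorem to RLL-type identities $L_{12}^{\epsilon_1}L_{13}^{\epsilon_2}S_{23}=S_{23}L_{13}^{\epsilon_2}L_{12}^{\epsilon_1}$ with $S\in\{R,R^{-T}\}$, together with one combined identity at the constant term, and the paper observes that all of these follow from the four relations $L_{12}^{\pm}L_{13}^{\pm}R_{23}=R_{23}L_{13}^{\pm}L_{12}^{\pm}$ and $L_{12}^{\pm}L_{13}^{\mp}R_{23}=R_{23}L_{13}^{\mp}L_{12}^{\pm}$ of \propref{prop: keyeq}. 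So far this matches.

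The gap is in your claimed derivation of the pure relations from \lemref{lem-L-D}. That lemma is an intertwining statement: $L^{\pm}$ conjugates $(1\otimes\pi)\Delta(u)$ into $(1\otimes\pi)\Delta'(u)$ as $u$ ranges over ${\rm U}_q(\gl_{m,n})$. The relation $L_{12}^{+}L_{13}^{+}R_{23}=R_{23}L_{13}^{+}L_{12}^{+}$, by contrast, encodes quadratic commutation relations among the matrix entries $\ell^{+}_{ab}$ of $L^{+}$ sitting in the first tensor factor, which is not acted on by any representation. To extract it from \lemref{lem-L-D} one must take $u=\ell^{+}_{ab}$ and know the coproducts of these entries; the argument closes only if $\Delta(\ell^{\pm}_{ab})=\sum_{c}\ell^{\pm}_{ac}\otimes\ell^{\pm}_{cb}$, a matrix-coproduct (hexagon-type) identity for the explicit $L$ operators. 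That identity is nowhere established in the paper (it appears only as the \emph{definition} of the bialgebra structure on the abstract algebra ${\rm U}(R)$ in the final subsection), and proving it for the elements $K_bE_{ab}$ is a nontrivial induction of roughly the same size as the computation you are trying to avoid. Without it, ``specialising \lemref{lem-L-D} in position two via $\pi$'' does not make $R_{23}$ an intertwiner between $L_{12}^{+}L_{13}^{+}$ and $L_{13}^{+}L_{12}^{+}$; the fact that $(\pi\otimes 1)L^{+}=R$ only yields the relation $R(\pi\otimes\pi)\Delta(u)=(\pi\otimes\pi)\Delta'(u)R$ in $\End_{\mathcal{K}}(V\otimes V)$, which the paper uses for the dual matrix elements $t_{ab}$, not for the RLL relations. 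The paper in fact proves all four relations of \propref{prop: keyeq} --- pure and mixed alike --- by one and the same direct computation, acting on the basis vectors $1\otimes v_c\otimes v_d$ and treating the cases $c=d$, $c<d$, $c>d$ with the commutation relations of \lemref{lem: KErel}, \lemref{eE} and \lemref{lem: Erel}. To complete your proof you must either state and prove the coproduct formula for $\ell^{\pm}_{ab}$, or carry out for the pure relations the same direct verification you already concede is necessary for the mixed ones.
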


\subsection{Proof of \thmref{thm-1}}
To prove \thmref{thm-1}, we expand the quantum Yang-Baxter equation involving the parameters $x$ and $y$. Upon comparing the coefficient of each monomial term $x^iy^j$, it becomes evident that we only need to prove the following proposition.

\begin{prop}\label{prop: keyeq}
	Maintain the notation above. We have
	\begin{align*}
		L_{12}^{+}L_{13}^{+}R_{23}=R_{23}L_{13}^{+}L_{12}^{+}, \quad L_{12}^{-}L_{13}^{-}R_{23}=R_{23}L_{13}^{-}L_{12}^{-}, \quad
		L_{12}^{-}L_{13}^{+}R_{23}=R_{23}L_{13}^{+}L_{12}^{-}.
	\end{align*}
\end{prop}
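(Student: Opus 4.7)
The plan is to verify each of the four RLL-type relations in Proposition \ref{prop: keyeq} by direct componentwise computation, exploiting the sparsity of $R$ and using symmetry to cut down the workload. Expanding both sides of each relation in the basis $e_{\alpha\beta} \otimes e_{\gamma\delta}$ of $\End_{\mathcal{K}}(V) \otimes \End_{\mathcal{K}}(V)$, and using the explicit form of $R$ from \eqref{eq: Rmat} (whose only nonzero matrix entries are $R_{aa,aa} = q_a$, $R_{ab,ab} = 1$ for $a \neq b$, and $R_{ba,ab} = q - q^{-1}$ for $a < b$), each relation reduces to a finite family of polynomial identities in ${\rm U}_q(\mathfrak{gl}_{m,n})$ among the matrix entries of $L^{\pm}$ as written in \eqref{def-L+} and \eqref{def-Lmin}.

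To reduce the workload further, I first observe that the computation preceding \eqref{def-L--} adapts to show the analogous identity $(L^+)^{-1} = (S \otimes 1)L^+$. Applying $(S \otimes 1 \otimes 1)$ to the relation $L^+_{12}L^+_{13}R_{23} = R_{23}L^+_{13}L^+_{12}$, together with the fact that $S$ is an anti-homomorphism, produces an RLL-type identity for $(L^+)^{-1}$; multiplying by suitable $L^+$-factors recovers an equivalent relation. A parallel argument, combined with the analogous identity for $L^-$, reduces the $L^-L^-$ case to the $L^+L^+$ case, and the two mixed relations to essentially one. The substantive work therefore reduces to two cases, say $L^+_{12}L^+_{13}R_{23} = R_{23}L^+_{13}L^+_{12}$ and $L^+_{12}L^-_{13}R_{23} = R_{23}L^-_{13}L^+_{12}$.

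For the remaining identities, the case analysis is organised by the coincidences and relative ordering among the indices $(\alpha,\nu,\gamma,\sigma) \in I^4$ appearing in the matrix entries. Each individual case is verified using the $K$-$E$ commutation relations of \lemref{lem: KErel}, the bracket identities of \lemref{eE}, the $E$-$E$ commutation relations of \lemref{lem: Erel}, and (for the mixed case) the expansion $\bar E_{ba} = E_{ba} + (q-q^{-1})\sum_{c=a+1}^{b-1}E_{ca}\bar E_{bc}$ established in the proof of \eqref{def-L--}, which connects the entries of $L^-$ to those of $(L^-)^{-1}$.

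The main obstacle is the combinatorial complexity of the case analysis, which requires careful tracking of the signs arising from $q_a = q$ for $a \le m$ versus $q_a = -q^{-1}$ for $a > m$. This bookkeeping becomes delicate when one of $\alpha, \nu, \gamma, \sigma$ equals $m$ or $m+1$; it is precisely in these cases that the nilpotency $E_{ca}^2 = 0$ for $a \le m < c$ from \eqref{eq:E10} and the modified Serre-type relations around the $m$-th node become essential. These degenerate features are what ensure the closure of the RLL identities for ${\rm U}_q(\mathfrak{gl}_{m,n})$; their interplay with the identities familiar from the ordinary quantum group case is the subtle point of the proof.
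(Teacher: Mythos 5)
Your core strategy --- expanding both sides against a basis of $\End_{\mathcal{K}}(V)\otimes\End_{\mathcal{K}}(V)$ and verifying the resulting identities in ${\rm U}_q(\mathfrak{gl}_{m,n})$ case by case using \lemref{lem: KErel}, \lemref{eE} and \lemref{lem: Erel} --- is exactly the paper's method (the paper acts on the vectors $1\otimes v_c\otimes v_d$ and splits into the cases $c=d$, $c<d$, $c>d$), and your identification of where the degenerate features enter (the nilpotency \eqref{eq:E10} and the sign bookkeeping at the $m$-th node) is accurate.

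However, the reduction from four relations to two via the antipode does not work as described. Applying $S\otimes 1\otimes 1$ to $L^{+}_{12}L^{+}_{13}R_{23}=R_{23}L^{+}_{13}L^{+}_{12}$ reverses products in the first tensor slot only, yielding $N_{13}N_{12}R_{23}=R_{23}N_{12}N_{13}$ with $N=(S\otimes 1)L^{+}=(L^{+})^{-1}$; inverting both sides then returns the original $L^{+}L^{+}$ relation. The operation ``apply $S\otimes1\otimes1$, then invert'' therefore maps each of the four relations to an equivalent form of \emph{itself}: it never produces the $L^{-}L^{-}$ relation from the $L^{+}L^{+}$ one, nor does it interchange the two mixed relations, because $S$ sends $L^{\pm}$ to $(L^{\pm})^{-1}$ rather than to $L^{\mp}$. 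Concretely, $(L^{-})^{-1}=\sum_a K_a\otimes e_{aa}+(q-q^{-1})\sum_{a<b}K_a\bar E_{ba}\otimes e_{ab}$ is built from the elements $K_a\bar E_{ba}$ and sits in the upper-triangular positions $e_{ab}$ ($a<b$), whereas $L^{+}$ is built from $K_bE_{ab}$ and sits in the lower-triangular positions $e_{ba}$; the RLL identity for the former is a genuinely different statement from the one for the latter. To salvage a reduction of this kind you would need an (anti)automorphism of ${\rm U}_q(\mathfrak{gl}_{m,n})$ exchanging $e_a$ and $f_a$ and intertwining $E_{ab}$ with $\bar E_{ba}$, combined with transposition on the $\End_{\mathcal{K}}(V)$ factors; none of this is set up in the paper or in your outline (and your auxiliary claim $(L^{+})^{-1}=(S\otimes1)L^{+}$ would itself need the analogue of the inductive identity proved for $L^{-}$). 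As written, then, all four relations must be verified directly, each by the same kind of case analysis you describe --- which is precisely what the paper does, proving $L^{+}_{12}L^{+}_{13}R_{23}=R_{23}L^{+}_{13}L^{+}_{12}$ in full and asserting that the remaining three are analogous.
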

\begin{proof}
We illustrate our method by providing a detailed proof of the first relation, as the remaining cases are similar. It suffices to prove $L_{12}^{+}L_{13}^{+}R_{23}=R_{23}L_{13}^{+}L_{12}^{+}$ by acting on the vectors $1\otimes v_c\otimes v_d$ ($c,d\in {\bf I}_{m,n}$).   The proof is then divided into the following three claims.
	
	\noindent
	{\bf Claim 1}: $L_{12}^{+}L_{13}^{+}R_{23}(1\otimes v_c\otimes v_c)=R_{23}L_{13}^{+}L_{12}^{+}(1\otimes v_c\otimes v_c)$ for all $c\in {\bf I}_{m,n}$.
	
	On the right hand side, we have
	\begin{align*}
	&R_{23}L_{13}^{+}L_{12}^{+}(1\otimes v_c\otimes v_c)
	=R_{23}L_{13}^{+}(K_c\otimes v_c\otimes v_c+(q-q^{-1})\sum\limits_{c<b}K_bE_{cb}\otimes v_b\otimes v_c)\\
	={}&R_{23}( K_c^2\otimes v_c\otimes v_c +(q-q^{-1})\sum\limits_{c<b}K_cK_bE_{cb}\otimes v_b\otimes v_c\\
	&+(q-q^{-1})\sum\limits_{c<b}K_bE_{cb}K_c\otimes v_c\otimes v_b+(q-q^{-1})^2\sum\limits_{c<b,c<a}K_bE_{cb}K_aE_{ca}\otimes v_a\otimes v_b)\\
	={} & q_cK_c^2\otimes v_c\otimes v_c +(q-q^{-1})\sum\limits_{c<b}K_cK_bE_{cb}\otimes v_b\otimes v_c\\
	&+(q-q^{-1})^2\sum\limits_{c<b}K_cK_bE_{cb}\otimes v_c\otimes v_b+(q-q^{-1})\sum\limits_{c<b}K_bE_{cb}K_c\otimes v_c\otimes v_b\\
	&+(q-q^{-1})^2\sum\limits_{c<a}q_aK_aE_{ca}K_aE_{ca}\otimes v_a\otimes v_a+(q-q^{-1})^3\sum\limits_{c<a<b}K_aE_{ca}K_bE_{cb}\otimes v_a\otimes v_b\\
	&+(q-q^{-1})^2(\sum\limits_{c<b<a}+\sum\limits_{c<a<b})K_bE_{cb}K_aE_{ca}\otimes v_a\otimes v_b.
	\end{align*}

	On the left hand side, we have
	\begin{align*}
	&L_{12}^{+}L_{13}^{+}R_{23}(1\otimes v_c\otimes v_c)= q_c L_{12}^{+}L_{13}^{+}(1\otimes v_c\otimes v_c)\\
	={}& q_c L_{12}^{+}(K_c\otimes v_c\otimes v_c+(q-q^{-1})\sum\limits_{c<b}K_bE_{cb}K_c\otimes v_c\otimes v_b)\\
	={} & q_cK_c^2\otimes v_c\otimes v_c +(q-q^{-1})q_c\sum\limits_{c<b}K_bE_{cb}K_c\otimes v_b\otimes v_c +(q-q^{-1})q_c\sum\limits_{c<b}K_cK_bE_{cb}\otimes v_c\otimes v_b\\
	&+(q-q^{-1})^2q_c\sum\limits_{c<a}K_aE_{ca}K_aE_{ca}\otimes v_a\otimes v_a+(q-q^{-1})^2q_c\sum\limits_{c<a<b}K_aE_{ca}K_bE_{cb}\otimes v_a\otimes v_b \\
	&+(q-q^{-1})^2q_c\sum\limits_{c<b<a}K_aE_{ca}K_bE_{cb}\otimes v_a\otimes v_b.
	\end{align*}
	These six terms in the above expression are denoted by  $S_i$,  $1\leq i\leq 6$, respectively.  Applying \lemref{lem: KErel}, we obtain
	\begin{align*}
	S_2&= (q-q^{-1}) \sum\limits_{c<b}K_bK_cE_{cb}\otimes v_b\otimes v_c,\\
	S_3&= (q-q^{-1})(q-q^{-1}+ q_{c}^{-1})\sum\limits_{c<b}K_cK_bE_{cb}\otimes v_c\otimes v_b\\
	&= (q-q^{-1})^2 \sum\limits_{c<b}K_cK_bE_{cb}\otimes v_c\otimes v_b + (q-q^{-1}) \sum\limits_{c<b}K_bE_{cb}K_c\otimes v_c\otimes v_b,
	\end{align*}
	where in the expression for $S_3$ we have used the fact that $q-q^{-1}= q_c-q_c^{-1}$ for any $c\in {\bf I}_{m,n}$. Let us consider $S_4$. If $c\geq m$, then $q_c=q_a$ for any $a>c$. If $a\geq m>c$, then $E_{ac}^2=0$ by \lemref{lem: Erel}. Therefore, we have
	\[
	\begin{aligned}
	S_4 &=(q-q^{-1})^2 \sum_{m\leq c<a} q_a K_aE_{ca}K_aE_{ca}\otimes v_a\otimes v_a + (q-q^{-1})^2q_c \sum_{c< m \leq a} q_{a}^{-1} K_aE_{ca}^2K_a \otimes v_a\otimes v_a  \\
	& = \sum_{c<a} (q-q^{-1})^2q_a K_aE_{ca}K_aE_{ca}\otimes v_a\otimes v_a.
	\end{aligned}
	\]
	Applying \lemref{lem: Erel} again, we obtain
	\begin{align*}
	S_5& = (q-q^{-1})^2(q_c^{-1}+ q-q^{-1})\sum\limits_{c<a<b}K_aE_{ca}K_bE_{cb}\otimes v_a\otimes v_b\\
	&= (q-q^{-1})^2 \sum\limits_{c<a<b}K_bE_{cb} K_aE_{ca}\otimes v_a\otimes v_b + (q-q^{-1})^3\sum\limits_{c<a<b} K_aE_{ca}K_bE_{cb}\otimes v_a\otimes v_b, \\
	S_6&= (q-q^{-1})^2 \sum\limits_{c<b<a}K_bE_{cb}K_aE_{ca}\otimes v_a\otimes v_b .
	\end{align*}
	Adding up $S_i$ for $1\leq i \leq 6$, we obtain the expression for $R_{23}L_{13}^{+}L_{12}^{+}(1\otimes v_c\otimes v_c)$ obtained earlier.

	\noindent
	{\bf Claim 2}: $L_{12}^{+}L_{13}^{+}R_{23}(1\otimes v_c\otimes v_d)=R_{23}L_{13}^{+}L_{12}^{+}(1\otimes v_c\otimes v_d)$ for all $c<d$.
	
	On the right hand side, we have
	\begin{align*}
	&R_{23}L_{13}^{+}L_{12}^{+}(1\otimes v_c\otimes v_d) \\
	=& R_{23} \big(K_dK_c\otimes v_c\otimes v_d + (q-q^{-1}) \sum_{d<b} K_b E_{db}K_c \otimes v_c \otimes v_b \\
	& +(q-q^{-1}) \sum_{c<b}  K_d K_b E_{cb} \otimes v_b \otimes v_d +(q-q^{-1})^2 \sum_{d<b,\ c<a} K_b E_{db} K_a E_{ca} \otimes v_a \otimes v_b \big) \\
	=&  K_dK_c\otimes v_c\otimes v_d + (q-q^{-1}) \sum_{d<b} K_b E_{db}K_c \otimes v_c \otimes v_b+  (q-q^{-1}) q_d K_d^2 E_{cd} \otimes v_d\otimes v_d \\
	& + (q-q^{-1}) (\sum_{c<b<d}+ \sum_{b>d}) K_d K_b E_{cb} \otimes v_b \otimes v_d +(q-q^{-1})^2 \sum_{b>d} K_d K_b E_{cb} \otimes v_d \otimes v_b \\
	&+ (q-q^{-1})^2 \sum_{b>d} q_b K_b E_{db}K_b E_{cb} \otimes v_b \otimes v_b + (q-q^{-1})^2 \sum_{b>d} K_b E_{db}K_d E_{cd} \otimes v_d \otimes v_b\\
	&+ (q-q^{-1})^2 (\sum_{d<a<b} +\sum_{c<a<d<b}) K_b E_{db} K_a E_{ca} \otimes v_a \otimes v_b \\
	& +(q-q^{-1})^2 \sum_{d<b<a} K_b E_{db} K_a E_{ca} \otimes v_a \otimes v_b + (q-q^{-1})^3 \sum_{d<b<a} K_b E_{db} K_a E_{ca} \otimes v_b \otimes v_a,
	\end{align*}
	where in the last equation we have partitioned the sum $\sum_{c<b}$ into $\sum_{b=d} + \sum_{c<b<d} +\sum_{b>d}$ and similarly split $\sum_{d<b, c<a}$  into $\sum_{d<a=b}+ \sum_{a=d<b} + \sum_{d<a<b}+ \sum_{c<a<d<b}+ \sum_{d<b<a}$.

	Considering the left hand side, we have
	\begin{align*}
	&L_{12}^{+}L_{13}^{+}R_{23}(1\otimes v_c\otimes v_d)=  L_{12}^{+}L_{13}^{+}(1\otimes v_c\otimes v_d)\\
	={}&  L_{12}^{+}(K_d\otimes v_c\otimes v_d+(q-q^{-1})\sum\limits_{b>d}K_bE_{db}K_c\otimes v_c\otimes v_b)\\
	={} & K_cK_d\otimes v_c\otimes v_d +(q-q^{-1})K_c\sum\limits_{b>d}K_bE_{db}\otimes v_c\otimes v_b+(q-q^{-1})\sum\limits_{b>c}K_bE_{cb}K_d\otimes v_b\otimes v_d\\
	&+(q-q^{-1})^2\sum\limits_{b>d}K_bE_{cb}K_bE_{db}\otimes v_b\otimes v_b+(q-q^{-1})^2\sum\limits_{b>d}K_dE_{cd}K_bE_{db}\otimes v_d\otimes v_b\\
	&+(q-q^{-1})^2(\sum\limits_{d<b<a}+\sum\limits_{c<a<d<b}+\sum\limits_{d<a<b})K_aE_{ca}K_bE_{db}\otimes v_a\otimes v_b.
	\end{align*}
	We denote the eight terms appearing in the above expression  by $S'_i, 1\leq i\leq 8$, respectively.
	
	Now we  compare two expressions for  $R_{23}L_{13}^{+}L_{12}^{+}(1\otimes v_c\otimes v_d)$ and $L_{12}^{+}L_{13}^{+}R_{23}(1\otimes v_c\otimes v_d)$. Notice that  $S'_1$ and $S'_2$ have appeared in the expression for $R_{23}L_{13}^{+}L_{12}^{+}(1\otimes v_c\otimes v_d)$. Splitting the sum $\sum_{b>c}$ into $\sum_{b=d} + \sum_{c<b<d}+ \sum_{b>d}$, we obtain
	\begin{align*}
	S'_3= (q-q^{-1})q_dK_d^2 E_{cd}\otimes v_d\otimes v_d +  (q-q^{-1}) (\sum_{c<b<d}+ \sum_{b>d}) K_d K_b E_{cb} \otimes v_b \otimes v_d .
	\end{align*}
	Using \lemref{lem: KErel} and \lemref{lem: Erel}, we have
	\begin{align*}
	S'_4& = (q-q^{-1})^2 \sum_{b>d} q_b K_b^2 E_{cb} E_{db} \otimes v_b \otimes v_b = (q-q^{-1})^2 \sum_{b>d} q_b^2 K_b^2 E_{db}  E_{cb} \otimes v_b \otimes v_b \\
	&=  (q-q^{-1})^2 \sum_{b>d} q_b K_b E_{db} K_b E_{cb} \otimes v_b \otimes v_b,\\
	S'_5 &=(q-q^{-1})^2 \sum_{b>d} K_dK_b E_{cd} E_{db} \otimes v_d \otimes v_b \\
	&= (q-q^{-1})^2 \sum_{b>d} K_dK_b (q_d^{-1} E_{db}E_{cd} +E_{cb}) \otimes v_d \otimes v_b \\
	&= (q-q^{-1})^2 \sum_{b>d} K_b E_{db} K_d E_{cd} \otimes v_d \otimes v_b + (q-q^{-1})^2\sum_{b>d} K_d K_b E_{cb} \otimes v_d\otimes v_b,\\
	S'_6 &= (q-q^{-1})^2 \sum\limits_{d<b<a}K_bE_{db}K_aE_{ca}\otimes v_a\otimes v_b,\\
	S'_7 &= (q-q^{-1})^2  \sum\limits_{c<a<d<b} K_bE_{db}K_aE_{ca}\otimes v_a\otimes v_b,\\
	S'_8 &= (q-q^{-1})^2 \sum_{d<a<b} K_a K_bE_{ca}E_{db} \otimes v_a \otimes v_b  \\
	&= (q-q^{-1})^2 \sum_{d<a<b} K_a K_b (E_{db} E_{ca}+(q-q^{-1})E_{da}E_{cb}) \otimes v_a\otimes v_b\\
	&= (q-q^{-1})^2 \sum_{d<a<b} K_a K_bE_{db} E_{ca} \otimes v_a\otimes v_b +  (q-q^{-1})^3 \sum_{d<a<b} K_a E_{da} K_b E_{cb} \otimes v_a\otimes v_b.
	\end{align*}
	Summing up the expressions for $S'_i, 1\leq i\leq 8$, we obtain  $R_{23}L_{13}^{+}L_{12}^{+}(1\otimes v_c\otimes v_d)$ as desired.

	Finally, we need to verify the following.
	
	\noindent
	{\bf Claim 3}: $L_{12}^{+}L_{13}^{+}R_{23}(1\otimes v_c\otimes v_d)=R_{23}L_{13}^{+}L_{12}^{+}(1\otimes v_c\otimes v_d)$ for all $c>d$.
	
	On the right hand side, we have
	\begin{align*}
	&R_{23}L_{13}^{+}L_{12}^{+}(1\otimes v_c\otimes v_d)= R_{23}L_{13}^+ (K_c\otimes v_c \otimes v_d + (q-q^{-1})\sum_{c<b} K_b E_{cb}\otimes v_b \otimes v_d ) \\
	=& R_{23}\big(
	K_dK_c \otimes v_c \otimes v_d +(q-q^{-1})\sum_{d<b}K_bE_{db} K_c \otimes v_c\otimes v_b
	+(q-q^{-1})\sum_{c<b} K_dK_bE_{cb} \otimes v_b \otimes v_d\\
	& + (q-q^{-1})^2 \sum_{d<b, c<a} K_bE_{db}K_aE_{ca}\otimes v_a\otimes v_b\big)\\
	=& K_dK_c\otimes v_c \otimes v_d + (q-q^{-1}) K_dK_c \otimes v_d\otimes v_c+ (q-q^{-1})q_c K_c E_{dc} K_c \otimes v_c\otimes v_c\\
	& + (q-q^{-1}) \sum_{b>c} K_b E_{db}K_c \otimes v_c \otimes v_b + (q-q^{-1}) \sum_{d<b<c} K_bE_{db}K_c \otimes v_c \otimes v_b\\
	&+ (q-q^{-1})^2 \sum_{d<b<c} K_b E_{db}K_c \otimes v_b \otimes v_c + (q-q^{-1})\sum_{b>c} K_d K_bE_{cb}\otimes v_b\otimes v_d \\
	&+ (q-q^{-1})^2 \sum_{b>c} K_d K_bE_{cb}\otimes v_d\otimes v_b +  (q-q^{-1})^2 \sum_{b>c} q_b K_bE_{db}K_bE_{cb} \otimes v_b \otimes v_b \\
	&+ (q-q^{-1})^2\sum_{a>c} K_c E_{dc}K_a E_{ca}\otimes v_a\otimes v_c + (q-q^{-1})^3 \sum_{a>c} K_c E_{dc}K_a E_{ca}\otimes v_c\otimes v_a \\
	&+ (q-q^{-1})^2 (\sum_{c<a<b}+\sum_{c<b<a}) K_bE_{db} K_a E_{ca} \otimes v_a\otimes v_b +(q-q^{-1})^3 \sum_{c<b<a} K_b E_{db}K_a E_{ca} \otimes v_b\otimes v_a \\
	&+ (q-q^{-1})^2 \sum_{d<b<c<a} K_bE_{db}K_a E_{ca}\otimes v_a\otimes v_b  + (q-q^{-1})^3 \sum_{d<b<c<a} K_bE_{db}K_a E_{ca}\otimes v_b\otimes v_a.
	\end{align*}
	
	We proceed to compute the left hand side:
	\begin{align*}
	&L_{12}^{+}L_{13}^{+}R_{23}(1\otimes v_c\otimes v_d)=  L_{12}^{+}L_{13}^{+}(1\otimes v_c\otimes v_d+(q-q^{-1})1\otimes v_d\otimes v_c)\\
	={}&  L_{12}^{+}(K_d\otimes v_c\otimes v_d+(q-q^{-1})K_c\otimes v_d\otimes v_c+(q-q^{-1})\sum\limits_{b>d}K_bE_{db}\otimes v_c\otimes v_b\\
	{}&+(q-q^{-1})^2\sum\limits_{b>c}K_bE_{cb}\otimes v_d\otimes v_b)\\
	={} & K_cK_d\otimes v_c\otimes v_d +(q-q^{-1})\sum\limits_{b>c}K_bE_{cb}K_d\otimes v_b\otimes v_d+(q-q^{-1})K_dK_c\otimes v_d\otimes v_c\\
	{}&+(q-q^{-1})^2\sum\limits_{b>d}K_bE_{db}K_c\otimes v_b\otimes v_c+(q-q^{-1})^2K_d\sum\limits_{b>c}K_bE_{cb}\otimes v_d\otimes v_b\\
	{}&+(q-q^{-1})K_c\sum\limits_{a>c,b>d}K_bE_{db}\otimes v_c\otimes v_b+(q-q^{-1})^2K_aE_{ca}\sum\limits_{b>d}K_bE_{db}\otimes v_a\otimes v_b\\
	{}&+(q-q^{-1})^3\sum\limits_{a>d,b>c}K_aE_{da}K_bE_{cb}\otimes v_a\otimes v_b\\
	={} & K_cK_d\otimes v_c\otimes v_d +(q-q^{-1})\sum\limits_{b>c}K_bE_{cb}K_d\otimes v_b\otimes v_d+(q-q^{-1})K_dK_c\otimes v_d\otimes v_c\\
	{}& +(q-q^{-1})^2K_cE_{dc}K_c\otimes v_c\otimes v_c+(q-q^{-1})^2(\sum\limits_{a>c}+\sum\limits_{d<a<c})K_aE_{da}K_c\otimes v_a\otimes v_c\\
	{}& +(q-q^{-1})^2\sum\limits_{a>c}K_dK_bE_{ca}\otimes v_d\otimes v_a+(q-q^{-1})K_c^2E_{dc}\otimes v_c\otimes v_c\\
	& + (q-q^{-1})(\sum\limits_{b>c}+ \sum_{d<b<c}  )K_cK_bE_{db}\otimes v_c\otimes v_b +(q-q^{-1})^2\sum\limits_{a>c}K_aE_{ca}K_aE_{da}\otimes v_a\otimes v_a\\
	&+ (q-q^{-1})^2\sum\limits_{a>c}K_aE_{ca}K_cE_{dc}\otimes v_a\otimes v_c +(q-q^{-1})^2(\sum\limits_{c<b<a}+\sum\limits_{c<a<b}+\sum\limits_{d<b<c<a})K_aE_{ca}K_bE_{db}\otimes v_a\otimes v_b\\
	&+(q-q^{-1})^3\sum\limits_{a>c}K_aE_{da}K_aE_{ca}\otimes v_a\otimes v_a + (q-q^{-1})^3 \sum_{b>c}K_c E_{dc}K_bE_{cb} \otimes v_c\otimes v_b\\
	{} &+(q-q^{-1})^3(\sum\limits_{c<a<b}+\sum_{c<b<a}+ \sum\limits_{d<a<c<b})K_aE_{da}K_bE_{cb}\otimes v_a\otimes v_b.
	\end{align*}

	It remains to compare the expressions for both sides. In the expression for $L_{12}^{+}L_{13}^{+}R_{23}(1\otimes v_c\otimes v_d)$, we observe
	\begin{align*}
	&(q-q^{-1})^2K_c E_{dc}K_c\otimes v_c \otimes v_c + (q-q^{-1})K_c^2E_{dc}\otimes v_c\otimes v_c\\
	=&(q-q^{-1})^2K_c E_{dc}K_c\otimes v_c \otimes v_c + (q-q^{-1})q_{c}^{-1}K_cE_{dc}K_c\otimes v_c\otimes v_c\\
	=& (q-q^{-1})q_cK_cE_{dc}K_c\otimes v_c\otimes v_c,
	\end{align*}
	where the last equation follows from the substitution $q_c^{-1}=q_c-( q-q^{-1})$. Note that the resulting expression is a term of $R_{23}L_{13}^{+}L_{12}^{+}(1\otimes v_c\otimes v_d)$. We continue the comparison. Applying \lemref{lem: KErel} and \lemref{lem: Erel}, we have
	\begin{align*}
	&(q-q^{-1})^2\sum\limits_{a>c}K_aE_{da}K_c\otimes v_a\otimes v_c+(q-q^{-1})^2\sum\limits_{a>c}K_aE_{ca}K_cE_{dc}\otimes v_a\otimes v_c\\
	={}& (q-q^{-1})^2\sum\limits_{a>c}K_a(E_{dc}E_{ca}-q_c^{-1}E_{ca}E_{dc})K_c\otimes v_a\otimes v_c+(q-q^{-1})^2q_c^{-1}\sum\limits_{a>c}K_aE_{ca}E_{dc}K_c\otimes v_a\otimes v_c\\
	={}& (q-q^{-1})^2\sum\limits_{a>c}K_aE_{dc}E_{ca}K_c\otimes v_a\otimes v_c\\
	={}& (q-q^{-1})^2\sum\limits_{a>c}K_cE_{dc}K_aE_{ca}\otimes v_a\otimes v_c.
	\end{align*}
	Using (\ref{eq:KE1}) and (\ref{eq:E12}), we deduce
	\begin{align*}
	& (q-q^{-1})^2(\sum\limits_{d<b<c<a}+\sum\limits_{b>a>c})K_aE_{ca}K_bE_{db}\otimes v_a\otimes v_b= (q-q^{-1})^2(\sum\limits_{d<b<c<a}+\sum\limits_{b>a>c})K_bE_{db}K_aE_{ca}\otimes v_a\otimes v_b.
	\end{align*}
	Using  (\ref{eq:KE1}) and (\ref{eq:E14}), we arrive at
	\begin{align*}
	& (q-q^{-1})^2\sum\limits_{a>b>c}K_aE_{ca}K_bE_{db}\otimes v_a\otimes v_b+(q-q^{-1})^3\sum\limits_{a>b>c}K_aE_{da}K_bE_{cb}\otimes v_a\otimes v_b\\
	={} & (q-q^{-1})^2\sum\limits_{a>b>c}(K_aK_bE_{ca}E_{db}+(q-q^{-1})K_aK_bE_{da}E_{cb})\otimes v_a\otimes v_b\\
	={} & (q-q^{-1})^2\sum\limits_{a>b>c}K_bK_a(E_{ca}E_{db}+(q-q^{-1})E_{da}E_{cb})\otimes v_a\otimes v_b\\
	={} & (q-q^{-1})^2\sum\limits_{a>b>c}K_bE_{db}K_aE_{ca}\otimes v_a\otimes v_b.
	\end{align*}
	From (\ref{eq:KE3}), (\ref{eq:E15}), and $q-q^{-1}+q_a^{-1}=q_a$ for $a\in {\bf I}_{m,n}$, we have
	\begin{align*}
	& (q-q^{-1})^2\sum\limits_{a>c}K_aE_{ca}K_aE_{da}\otimes v_a\otimes v_a+(q-q^{-1})^3\sum\limits_{a>c}K_aE_{da}K_aE_{ca}\otimes v_a\otimes v_a\\
	={}& (q-q^{-1})^2\sum\limits_{a>c}K_a(q_a^{-1}E_{ca}E_{da}K_a+(q-q^{-1})E_{da}K_aE_{ca})\otimes v_a\otimes v_a\\
	={}& (q-q^{-1})^2\sum\limits_{a>c}K_a(q_a^{-1}E_{da}K_aE_{ca}+(q-q^{-1})E_{da}K_aE_{ca})\otimes v_a\otimes v_a\\
	={}& (q-q^{-1})^2\sum\limits_{a>c}(q_a-1)K_aE_{da}K_aE_{ca}\otimes v_a\otimes v_a+(q-q^{-1})^2\sum\limits_{a>c}K_aE_{da}K_aE_{ca}\otimes v_a\otimes v_a.
	\end{align*}
	Now the remaining terms in  $L_{12}^{+}L_{13}^{+}R_{23}(1\otimes v_c\otimes v_d)$ and $R_{23}L_{13}^{+}L_{12}^{+}(1\otimes v_c\otimes v_d)$ coincide.
	Combining the results together, we complete the proof of Claim 3.
\end{proof}

We are in a position to prove \thmref{thm-1}.

\begin{proof}[Proof of \thmref{thm-1}]
Expanding the quantum Yang-Baxter equation from the theorem and comparing coefficients of $x^iy^j$ terms for  integers $i, j$, we obtain a system of equations: 
\begin{align}
	&L_{12}^{+}L_{13}^{+}R_{23}=R_{23}L_{13}^{+}L_{12}^{+},\label{eq-LLR1}\\ &L_{12}^{-}L_{13}^{-}R_{23}=R_{23}L_{13}^{-}L_{12}^{-},\label{eq-LLR2} \\
	&L_{12}^{-}L_{13}^{+}R_{23}=R_{23}L_{13}^{+}L_{12}^{-},\label{eq-LLR3}\\
	&L_{12}^{+}L_{13}^{+}R_{23}^{-T}=R_{23}^{-T}L_{13}^{+}L_{12}^{+}, \label{eq-LLR4}\\
	&L_{12}^{-}L_{13}^{-}R_{23}^{-T}=R_{23}^{-T}L_{13}^{-}L_{12}^{-},\label{eq-LLR5} \\
	&L_{12}^{+}L_{13}^{-}R_{23}^{-T}=R_{23}^{-T}L_{13}^{-}L_{12}^{+},\label{eq-LLR6}\\
	& L_{12}^{+}L_{13}^{-}R_{23}-L_{12}^{-}L_{13}^{+}R_{23}^{-T}=R_{23}L_{13}^{-}L_{12}^{+}-R_{23}^{-T}L_{13}^{+}L_{12}^{-}.\label{eq-LLR7}
\end{align}

The equations (\ref{eq-LLR1})-(\ref{eq-LLR3}) are given in \propref{prop: keyeq}. We will show that the remaining equations  follow from (\ref{eq-LLR1})-(\ref{eq-LLR3}). Denote by $T_{23}= 1\otimes T$ the linear map on $V\otimes V\otimes V$ that swaps the second and third tensor factors. We rewrite (\ref{eq-LLR1}) as  $R^{-1}_{23}L^+_{12}L^+_{13}=L^{+}_{13}L^+_{12} R^{-1}_{23}$. Conjugating both sides by $T_{23}$  yields \eqref{eq-LLR4}.  In exactly the same way, one obtains \eqref{eq-LLR5} and \eqref{eq-LLR6} by conjugating the rewritten forms of \eqref{eq-LLR2} and \eqref{eq-LLR3}, respectively.

It remains to prove \eqref{eq-LLR7}.  For the $R$-matrix given in \eqnref{eq: Rmat}, we define 
$\check{R}:=TR.$
Then we have $\check{R}=R^TT$ and hence $\check{R}^{-1}=TR^{-T}$. Also, by \cite[Corollary 4.9]{CWZ}, we have the Heckle relation
\begin{equation}\label{R-pro2}
	\check{R}^{-1}=R-(q-q^{-1}).
\end{equation}
Pre-multiplying both sides of (\ref{eq-LLR3}) by $T_{23}$, we have 
\begin{equation}\label{eq-LLR8}
	L_{13}^{-}L_{12}^{+}\check{R}_{23}=\check{R}_{23}L_{13}^{+}L_{12}^{-},
\end{equation}
and hence $\check{R}_{23}^{-1}L_{13}^{-}L_{12}^{+}=L_{13}^{+}L_{12}^{-}\check{R}_{23}^{-1}$. Now pre‑multiplying both sides of \eqref{eq-LLR7} by $T_{23}$ yields

\begin{align*}
	\mathrm{LHS}&=L_{13}^{+}L_{12}^{-}\check{R}_{23}-L_{13}^{-}L_{12}^{+}\check{R}_{23}^{-1}=L_{13}^{+}L_{12}^{-}(\check{R}_{23}^{-1}+q-q^{-1})-L_{13}^{-}L_{12}^{+}(\check{R}_{23}-(q-q^{-1}))\\
	&=\check{R}_{23}^{-1}L_{13}^{-}L_{12}^{+}+(q-q^{-1})L_{13}^{+}L_{12}^{-}-\check{R}_{23}L_{13}^{+}L_{12}^{-}+(q-q^{-1})L_{13}^{-}L_{12}^{+},\\ 
   \mathrm{RHS}&=\check{R}_{23}L_{13}^{-}L_{12}^{+}-\check{R}_{23}^{-1}L_{13}^{+}L_{12}^{-}=(\check{R}_{23}^{-1}+q-q^{-1})L_{13}^{-}L_{12}^{+}-(\check{R}_{23}-(q-q^{-1}))L_{13}^{+}L_{12}^{-}.
\end{align*}
Since $\mathrm{LHS}=\mathrm{RHS}$ and pre-multiplication by the invertible operator $T_{23}$ preserves equality, the original identity \eqref{eq-LLR7} follows.
\end{proof}

\section{The RLL realisation}\label{sec: RLL}
Motivated by \propref{prop: keyeq}, we give the RLL realisation of  the degenerate quantum group ${\rm U}_q(\mathfrak{gl}_{m,n})$. We refer to \cite{RTF} for  the RLL realisation of quantum groups. 
 
\begin{de}
Let $R$ be the $R$-matrix given by \eqnref{eq: Rmat}. 
	Let $\mathcal{U}(R)$ denote  the associative algebra over $\mathcal{K}$ generated by $\ell_{ab}^{+},\ell_{ba}^-,a\le b, a,b\in {\bf I}_{m,n}$, subject to the following relations: 
	\begin{align}\label{eq: RLL}
		&\mathcal{L}_{12}^{\pm }\mathcal{L}_{13}^{\pm } R_{23}= R_{23}\mathcal{L}_{13}^{\pm }\mathcal{L}_{12}^{\pm }, \quad \mathcal{L}_{12}^{-}\mathcal{L}_{13}^{+} R_{23}= R_{23}\mathcal{L}_{13}^{+}\mathcal{L}_{12}^{-},\\ 
         & \ell_{aa}^+ \ell_{aa}^-= \ell_{aa}^-\ell_{aa}^+=1, \quad a\in {\bf I}_{m,n}, 
	\end{align}
	where in \eqnref{eq: RLL} we set $\ell^+_{ab}=\ell^-_{ba}=0$ for $a>b$,    
\[ \mathcal{L}_{12}^{+}:= \sum_{a, b\in {\bf I}_{m,n}} \ell_{ab}^+\otimes e_{ba}\otimes 1, \quad \mathcal{L}_{12}^{-}:=\sum_{a, b\in {\bf I}_{m,n}}\ell_{ba}^-\otimes e_{ab}\otimes  1, \]
and   $\mathcal{L}_{13}^{\pm}$ are defined similarly by embedding $\sum_{a,b\in \mathbf{I}_{m,n}}\ell^{\pm}_{ab}\otimes e_{ba}$ into the the first and third tensor factors.   
\end{de}

\begin{prop}\label{prop: RTT}
The {\rm RLL} relations \eqnref{eq: RLL} among the generators $\ell^+_{ab}, \ell^-_{ba}, a\leq b, a,b\in {\bf I}_{m,n}, $ are given explicitly as follows. 
\begin{enumerate}
\item The equation $\mathcal{L}_{12}^{+}\mathcal{L}_{13}^{+} R_{23}= R_{23}\mathcal{L}_{13}^{+}\mathcal{L}_{12}^{+}$ is equivalent to the following relations: 
\begin{align*}
&\ell^+_{aa}\ell^+_{bb}= \ell^+_{bb}\ell^+_{aa}, \quad a,b \in {\bf I}_{m,n},\quad \quad (\ell^+_{ab})^2=0, \quad a\leq m<b,  \\ 
&\ell^+_{ab}\ell^+_{ac}= q_a \ell^+_{ac}\ell^+_{ab}, \quad b<c, \quad \quad \ell^+_{ab}\ell^+_{cb}= q_b \ell^+_{cb} \ell^+_{ab}, \quad a<c,\\ 
&\ell^+_{ab}\ell^+_{cd}= \ell^+_{cd}\ell^+_{ab}, \quad a<c, b>d, \\ 
&\ell^+_{ab}\ell^+_{cd}= \ell^+_{cd}\ell^+_{ab}+(q-q^{-1}) \ell^+_{cb}\ell^+_{ad}, \quad a<c, b<d. 
\end{align*}
\item The equation $\mathcal{L}_{12}^{-}\mathcal{L}_{13}^{-} R_{23}= R_{23}\mathcal{L}_{13}^{-}\mathcal{L}_{12}^{-}$ is equivalent to the following relations: 
\begin{align*}
&\ell^-_{aa}\ell^-_{bb}= \ell^-_{bb}\ell^-_{aa}, \quad a,b \in {\bf I}_{m,n},  \quad \quad  (\ell^-_{ab})^2=0, \quad a\leq m<b,  \\ 
&\ell^-_{ab}\ell^-_{ac}= q_a \ell^-_{ac}\ell^-_{ab}, \quad b<c, \quad \quad \ell^-_{ab}\ell^-_{cb}= q_b \ell^-_{cb} \ell^-_{ab}, \quad a<c,\\ 
&\ell^-_{ab}\ell^-_{cd}= \ell^-_{cd}\ell^-_{ab}, \quad a<c, b>d, \\ 
&\ell^-_{ab}\ell^-_{cd}= \ell^-_{cd}\ell^-_{ab}+(q-q^{-1}) \ell^-_{cb}\ell^-_{ad}, \quad a<c, b<d. 
\end{align*}
\item The equation $\mathcal{L}_{12}^{-}\mathcal{L}_{13}^{+} R_{23}= R_{23}\mathcal{L}_{13}^{+}\mathcal{L}_{12}^{-}$ is equivalent to the following relations: 
\begin{align*}
& \ell^-_{ab}\ell^+_{cb}= q_b \ell^+_{cb} \ell^-_{ab}, \quad a>c, \quad \quad\quad  \ell^-_{ab}\ell^+_{ac}= q_{a}^{-1} \ell^+_{ac}\ell^-_{ab}, \quad b<c,\\ 
& \ell^-_{ab}\ell^+_{cd}= \ell^+_{cd}\ell^-_{ab}, \quad a<c, b<d,\quad  \ell^-_{ab}\ell^+_{cd}= \ell^+_{cd}\ell^-_{ab},\quad   a>c, b>d,\\ 
& \ell^-_{ab} \ell^+_{cd}= \ell^+_{cd}\ell^-_{ab} +(q-q^{-1}) (\ell^+_{cb}\ell^-_{ad}- \ell^+_{ad}\ell^-_{cb}), \quad a>c, b<d.  
\end{align*}
\end{enumerate}
\end{prop}
\begin{proof}
Let $R=\sum_{a,b,c,d\in {\bf I}_{m,n}} R_{bd}^{ac} e_{ab}\otimes e_{cd}$. By \eqnref{eq: Rmat}, we have  
\begin{equation}\label{eq:R}
	R_{ab}^{ab}=1,\  a\neq b, \quad R_{aa}^{aa}=q_a, \quad R^{ab}_{ba}=q-q^{-1}, \ a<b, 
\end{equation}
and all other coefficients $R_{bd}^{ac}$ are zero. 
For part (1), note that 
\begin{align*}
	\mathcal{L}_{12}^{+}\mathcal{L}_{13}^{+} R_{23} &= \sum_{a,b,c,d,i,j,k,l\in {\bf I}_{m,n}}R_{bd}^{ac}( \ell_{ij}^+\otimes e_{ji}\otimes 1)(\ell_{kl}^+\otimes 1\otimes e_{lk}) (1\otimes e_{ab}\otimes e_{cd}) \\ 
	&= \sum_{i,j,k,l,b,d\in {\bf I}_{m,n}} R_{bd}^{ik} \ell^+_{ij}\ell^+_{kl}\otimes e_{jb}\otimes e_{ld}.      
\end{align*}
Similarly, we have 
\begin{align*} R_{23}\mathcal{L}_{13}^{+}\mathcal{L}_{12}^{+}
	&=\sum_{a,b,c,d,i,j,k,l\in {\bf I}_{m,n}}R_{bd}^{ac}( 1\otimes e_{ab}\otimes e_{cd})(\ell_{ij}^+\otimes 1\otimes e_{ji})( \ell_{kl}^+\otimes e_{lk}\otimes 1)  \\ 
	&= \sum_{i,j,k,l,a,c\in {\bf I}_{m,n}} R_{lj}^{ac} \ell^+_{ij}\ell^+_{kl}\otimes e_{ak}\otimes e_{ci}.  
\end{align*}
By the linear independence of the elements $e_{ab}\otimes e_{cd}, a,b,c,d\in {\bf I}_{m,n}$, we obtain 
\begin{equation}\label{eq:RT1}
	\sum_{i,k\in {\bf I}_{m,n}} R_{bd}^{ik} \ell^+_{ia}\ell^+_{kc} = \sum_{j,l\in {\bf I}_{m,n}} R_{lj}^{ac}  \ell^+_{dj}\ell^+_{bl}, \quad a,b,c,d\in {\bf I}_{m,n}. 
\end{equation}  	
Using \eqnref{eq:R}, we have 
\begin{align*}
	\sum_{i,k\in {\bf I}_{m,n}} R_{bd}^{ik} \ell^+_{ia}\ell^+_{kc}= \begin{cases}
		\ell^+_{ba}\ell^+_{dc}, &\text{if $b<d$,}\\
         q_b\ell^+_{ba}\ell^+_{bc}, &\text{if $b=d$,}\\
        \ell^{+}_{ba}\ell^+_{dc}+ (q-q^{-1})\ell^+_{da}\ell^+_{bc}, &\text{if $b>d$.}
	\end{cases} \\ 
	\sum_{j,l\in {\bf I}_{m,n}} R_{lj}^{ac}  \ell^+_{dj}\ell^+_{bl} = \begin{cases}
		\ell^+_{dc}\ell^+_{ba}+(q-q^{-1})\ell^+_{da}\ell^+_{bc}, &\text{if $a<c$,}\\
         q_a\ell^+_{da}\ell^+_{ba}, &\text{if $a=c$,}\\
        \ell^{+}_{dc}\ell^+_{ba}, &\text{if $a>c$.}
	\end{cases} 
\end{align*}
This yields nine commutation relations among the generators $\ell^+_{ab}, a,b\in {\bf I}_{m,n}$, with the convention that $\ell^+_{ab}=0$ whenever $a>b$. After removing redundancies  and reindexing appropriately, we obtain the relations stated in part (1). 

Similarly, in parts (2) and (3), the RLL relations can be rewritten as 
\begin{align*}\label{eq:RT2}
& \sum_{i,k\in {\bf I}_{m,n}} R_{bd}^{ik} \ell^-_{ia}\ell^-_{kc} = \sum_{j,l\in {\bf I}_{m,n}} R_{lj}^{ac}  \ell^-_{dj}\ell^-_{bl}, \quad a,b,c,d\in {\bf I}_{m,n}, \\ 
&\sum_{i,k\in {\bf I}_{m,n}} R_{bd}^{ik} \ell^-_{ia}\ell^+_{kc} = \sum_{j,l\in {\bf I}_{m,n}} R_{lj}^{ac}  \ell^+_{dj}\ell^-_{bl}, \quad a,b,c,d\in {\bf I}_{m,n}.
\end{align*}  
The explicit commutation  relations can be derived by using \eqnref{eq:R} and the convention that $\ell^{+}_{ab}=\ell^-_{ba}=0$ for $a>b$. 
\end{proof}

\begin{prop}
  The algebra $\mathcal{U}(R)$ is a Hopf algebra with comultiplication $\widetilde{\Delta}$, counit $\tilde{\epsilon}$, and antipode $\widetilde{S}$ determined by 
 \[
\widetilde{\Delta}(\ell_{ab}^{\pm}) =\sum_{c\in {\bf I}_{m,n}} \ell_{ac}^{\pm} \otimes \ell_{cb}^{\pm}, \quad \tilde{\epsilon}(\ell_{ab}^{\pm})=\delta_{ab},  \quad a,b \in {\bf I}_{m,n}, \quad \widetilde{S}(\mathcal{L}^{\pm})= (\mathcal{L}^{\pm})^{-1}, 
\]
 where $\mathcal{L}^{\pm}:= (\ell^{\pm}_{ab})$ are $(m+n)\times (m+n)$ matrices with $\ell_{ab}^+= \ell_{ba}^{-}=0$ whenever $a>b$. 
\end{prop}
\begin{proof}
It is straightforward to verify that $(\mathcal{U}(R),\widetilde{\Delta}, \tilde{\epsilon})$ is a bialgebra. Note that $\mathcal{L}^+$ is an upper triangular matrix with invertible diagonal entries $\mathcal{L}^+_{aa}$, hence it is invertible. Similarly, $\mathcal{L}^-$ is an invertible lower triangular matrix. We write the inverses $(\mathcal{L}^{\pm})^{-1}= (\tilde{\ell}^{\pm}_{ab})$. It is clear from \eqref{eq: RLL} that the inverses satisfy the opposite RLL relations in the opposite algebra of $\mathcal{U}(R)$.  Hence the map $\widetilde{S}: \mathcal{U}(R)\rightarrow \mathcal{U}(R)$ defined by $\widetilde{S}(\ell_{ab}^{\pm})= \tilde{\ell}_{ab}^{\pm}, a,b\in {\bf I}_{m,n}, $ is an algebra anti-morphism. It follows that $\widetilde{S}(\mathcal{L}^{\pm})\mathcal{L}^{\pm}=\mathcal{L}^{\pm}\widetilde{S}(\mathcal{L}^{\pm})=I$, where $I$ denotes the identity matrix.   This is equivalent to the antipode axiom $\sum_{c\in {\bf I}_{m,n}}\widetilde{S}(\ell_{ac})\ell_{cb}= \sum_{c\in {\bf I}_{m,n}}\ell_{ac}\widetilde{S}(\ell_{cb})= \epsilon(\ell_{ab})$. Therefore, $(\mathcal{U}(R), \widetilde{\Delta}, \tilde{\epsilon}, \widetilde{S})$ is a Hopf algebra. 
\end{proof}

Recall that  $\mathrm{U}_q(\mathfrak{gl}_{m,n})$ is a Hopf algebra with structure maps $\Delta, \epsilon, S$ defined by \eqnref{def-bialg} and \eqnref{eq: antipode}. It is well known that $(\mathrm{U}_q(\mathfrak{gl}_{m,n}), \Delta', \epsilon, S^{-1})$ also forms a Hopf algebra structure, where $\Delta'$ is the opposite coproduct, $S^{-1}$ denote the inverse of $S$, and $\epsilon$ remains the same.    

\begin{thm}
There exists a Hopf algebra isomorphism
\[ \psi: (\mathcal{U}(R), \widetilde{\Delta}, \tilde{\epsilon}, \widetilde{S}) \longrightarrow (\mathrm{U}_q(\mathfrak{gl}_{m,n}),\Delta', \epsilon, S^{-1}),  \]
 which is defined  on the generators by 
	\[
	\begin{aligned}
		&\psi(\ell_{aa}^{\pm})= K_a^{\pm 1}, \quad  
		\psi(\ell_{ab}^{+})= (q-q^{-1})K_b E_{ab}, \\ 
		 &\psi(\ell_{ba}^{-}) = -(q-q^{-1})E_{ba} K_b^{-1}, \quad a<b,\quad a,b\in {\bf I}_{m,n}.
	\end{aligned}
	\]	
\end{thm}
\begin{proof}  
We first prove that $\psi$ is an algebra isomorphism. 
By \propref{prop: keyeq}, the assignment above extends to a well‑defined algebra homomorphism $\psi: \mathcal{U}(R)\to \mathrm{U}_q(\mathfrak{gl}_{m,n})$. Since the image generates $\mathrm{U}_q(\mathfrak{gl}_{m,n})$,  $\psi$ is surjective.  

Conversely, define an algebra homomorphism  $\phi: \mathrm{U}_q(\mathfrak{gl}_{m,n})\rightarrow \mathcal{U}(R)$ on the Chevalley generators  by 
\begin{align*}
&\phi(K^{\pm 1}_b)=\ell^{\pm}_{bb},   \quad b\in \mathbf{I}_{m,n}, \quad \phi(e_a)= (q-q^{-1})^{-1} \ell^-_{a+1, a+1}\ell^+_{a,a+1},\\  
&\phi(f_a)= -(q-q^{-1})^{-1} \ell^-_{a+1,a}\ell^+_{a+1, a+1}, \quad a\in \mathbf{I}'_{m,n}.    
\end{align*}
The RLL relations from \propref{prop: RTT} ensure that $\phi$ respects all defining relations, so it is well‑defined.   Moreover,  similar to the proof of \lemref{S-act on E}, one shows by induction on $b-a$ that 
\[ \phi(E_{ab})=(q-q^{-1})^{-1}\ell^-_{bb}\ell^+_{ab}, \quad \phi(E_{ba})= -(q-q^{-1})^{-1}\ell^-_{ba}\ell^+_{bb}, \quad a<b.    \]
It follows that $\phi(\psi (\ell^{+}_{ab}))=\ell^{+}_{ab}$ and $\phi(\psi (\ell^{-}_{ba}))=\ell^{-}_{ba}$ for $a\leq b$.  Hence $\phi\psi=1_{\mathcal{U}(R)}$, so $\psi$ is injective and therefore  an algebra isomorphism. 
 
Finally, a direct calculation shows that 
\[
\widetilde{\Delta}\phi = (\phi\otimes \phi )\Delta', \quad \tilde{\epsilon} \phi = \epsilon, 
 \]
so $\phi$ is a bialgebra isomorphism. Since the antipode compatibility follows automatically, $\phi$  (and hence $\psi$) is a Hopf algebra isomorphism. 
\end{proof}

\begin{example}
	Consider ${\rm U}_q(\mathfrak{gl}_{1,1})$ from Example \ref{exam: simp} generated by $K_1^{\pm 1}, K_2^{\pm 1}$,  $E=E_{12}$ and $F=E_{21}$.  The algebra $\mathcal{U}(R)$ is generated by $\ell^{\pm}_{11}, \ell_{22}^{\pm}, \ell_{12}^{+}, \ell_{21}^{-}$ subject to the following relations: 
	\begin{align*}
	 &\ell^{\pm}_{11}\ell^{\pm}_{22}= \ell^{\pm}_{22}\ell^{\pm}_{11}, \quad \ell_{aa}^+ \ell_{aa}^-= \ell_{aa}^-  \ell_{aa}^+=1, \quad a=1,2, \\ 
	 & (\ell^+_{12})^2= (\ell^-_{21})^2=0, \\ 
	 &\ell^+_{11}\ell^+_{12}= q \ell^+_{12}\ell^+_{11}, \quad \ell^+_{12}\ell^+_{22}= -q^{-1} \ell^+_{22}\ell^+_{12}, \\& \ell^-_{11}\ell^-_{21}= q \ell^-_{21}\ell^-_{11}, \quad \ell^-_{21}\ell^-_{22}= -q^{-1} \ell^-_{22}\ell^-_{21}, \\
	 & \ell^-_{21} \ell^+_{12}= \ell^+_{12}\ell^-_{21}+(q-q^{-1})(\ell^+_{11}\ell^-_{22}- \ell^-_{11}\ell^+_{22}).
	\end{align*}
The Hopf algebra isomorphism between $\mathcal{U}(R)$ and $\mathrm{U}_q(\mathfrak{gl}_{1,1})$ is given explicitly by 
	\[
	\begin{aligned}
		&\ell^+_{11}\mapsto K_1,  \quad \ell^+_{12}\mapsto (q-q^{-1})K_2E, \quad \ell^+_{22}\mapsto K_2, \\ 
		&\ell^-_{11}\mapsto  K_1^{-1}, \quad \ell^-_{21}\mapsto -(q-q^{-1})F K_2^{-1}, \quad \ell^-_{22}\mapsto K_2^{-1}. \\ 
	\end{aligned}
	\]
\end{example}

\noindent{\bf Data availability statement}

\noindent Data sharing is not applicable to this article as no new data were created or analysed in this study.

\end{document}